\documentclass[reqno,centertags,12pt]{amsart}
\usepackage[letterpaper,margin=1.3in]{geometry}
\usepackage{amsmath,amsthm,amsfonts,amssymb,enumerate}
\usepackage[bookmarksopen=false,final]{hyperref}
%\usepackage[nobysame,abbrev]{amsrefs}
%\usepackage{showkeys}
%\usepackage[nomsgs,ignoreunlbld]{refcheck}
%\usepackage[pagebackref]{hyperref}
%\usepackage{refcheck}
%\nocite{*}
\usepackage[utf8]{inputenc}

%%%%%%%%%%%%%%%%%%%%%%%%%%%%%%%%%%%%%%%%%%%%%%%%%%%%%%%%%%%%%%%%%%%%%

\newcommand{\no}{\notag}

\newcommand{\bb}{\mathbb}

\newcommand{\ol}{\overline}

\newcommand{\ti}{\tilde}

\newcommand{\bs}{\backslash}
\newcommand{\al}{\alpha}
\newcommand{\be}{\beta}

\newcommand{\Om}{\Omega}

\newcommand{\te}{\theta}

\newcommand{\eps}{\varepsilon}
\newcommand{\pd}{\partial}
\newcommand{\supp}{\mathrm{supp}}

\newcommand{\dist}{\mathrm{dist}}
\newcommand{\ca}{\mathrm{Cap}}
\newcommand{\x}{{\mathbf x}}
\newcommand{\cf}{{\mathbf1}}

\renewcommand{\Re}{\mathrm{Re}}
\renewcommand{\Im}{\mathrm{Im}}

\newtheorem{theorem}{Theorem}
\newtheorem{lemma}[theorem]{Lemma}
\newtheorem{corollary}[theorem]{Corollary}
\theoremstyle{definition}
\newtheorem{definition}[theorem]{Definition}

\newtheorem{remark}[theorem]{Remark}
\newtheorem{conjecture}[theorem]{Conjecture}
\numberwithin{equation}{section}
\numberwithin{theorem}{section}
%\nocite
%%%%%%%%%%%%%%%%%%%%%%%%%%%%%%%%%%%%%%%%%%%%%%%%%%%%%%%%%%%%%%%%%%%%%

%%%%%%%%%%%%%%%%%%%%%%%%%%%%%%%%%%%%%%%%%%%%%%%%%%%%%%%%%%%%%%%%%%%%%
\begin{document}
	\title{Lower bounds for weighted Chebyshev and Orthogonal polynomials}
	
	\author{G\"{o}kalp Alpan}
	\address{Faculty of engineering and natural sciences,
		Sabancı University, İstanbul, Turkey}
	\email{gokalp.alpan@sabanciuniv.edu}
	\thanks{\footnotesize G.A. is supported by the Scientific and Technological Research Council of Türkiye (TÜBİTAK) ARDEB 1001 Grant Number 123F358. G.A is also supported by the BAGEP Award of the Science Academy.}
	
	\author{Maxim Zinchenko}
	\address{Department of Mathematics and Statistics, University of New Mexico, Albuquerque, NM 87131, USA}
	\email{maxim@math.unm.edu}
	\thanks{\footnotesize M.Z. is supported in part by the Simons Foundation Grant MP--TSM--00002651.}
	
	\subjclass[2010]{Primary 41A17; Secondary 41A44, 42C05, 33C45}
	\keywords{Widom factors, Chebyshev polynomials, orthogonal polynomials, asymptotics, lower bounds}
	
	\begin{abstract}
        We derive optimal asymptotic and non-asymptotic lower bounds on the Widom factors for weighted Chebyshev and orthogonal polynomials on compact subsets of the real line. In the Chebyshev case we extend the optimal non-asymptotic lower bound previously known only in a handful of examples to regular compact sets and a large class weights. Using the non-asymptotic lower bound, we extend Widom's asymptotic lower bound for weights bounded away from zero to a large class of weights with zeros including weights with strong zeros and infinitely many zeros. As an application of the asymptotic lower bound we extend Bernstein's 1931 asymptotics result for weighted Chebyshev polynomials on an interval to arbitrary Riemann integrable weights with finitely many zeros and to some continuous weights with infinitely many zeros. In the case of orthogonal polynomials, we derive optimal asymptotic and non-asymptotic lower bound on arbitrary regular compact sets for a large class of weights in the non-asymptotic case and for arbitrary Szeg\H{o} class weights in the asymptotic case, extending previously known bounds on finite gap and Parreau--Widom sets.
	\end{abstract}
	
	\date{\today}
	\maketitle
	%%%%%%%%%%%%%%%%%%%%%%%%%%%%%%%%%%%%%%%%%%%%%%%%%%%%%%%%%%%%%%%%%%%%%
	
	%%%%%%%%%%%%%%%%%%%%%%%%%%%%%%%%%%%%%%%%%%%%%%%%%%%%%%%%%%%%%%%%%%%%%%%
	\section{Introduction}
	%%%%%%%%%%%%%%%%%%%%%%%%%%%%%%%%%%%%%%%%%%%%%%%%%%%%%%%%%%%%%%%%%%%%%%%
	Let $K$ be a non-polar compact subset of $\bb C$ and denote the logarithmic capacity of $K$ by $\ca(K)$. We have $\ca(K)>0$ in this case and so there exists the equilibrium measure of $K$ which we denote by $\mu_K$. Let $\Om_K$ be the unbounded component of $\overline{\bb C}\bs K$, then the Green function for the domain $\Om_K$ is given by
	\begin{align}\label{GrFn}
		g_K(z) = -\log\ca(K)+\int\log|z-\zeta|\,d\mu_K(\zeta), \quad z\in\bb C.
	\end{align}
	We note that $g_K$  has the following asymptotics at infinity
	\begin{align}\label{GrAs}
		g_K(z) = \log|z| - \log\ca(K) + o(1).
	\end{align}
	We use $\| \cdot \|_K$ to denote the sup-norm on $K$. We call $w:K\to[0,\infty)$ a weight function on $K$ if $w$ is Borel measurable and takes positive values on a subset of $K$ containing infinitely many points. If $w$ is a bounded weight function then the $n$-th Chebyshev polynomial with respect to $w$ on $K$, denoted by $T_{n,w}$, is the unique minimizer of $\|wP_n\|_K$ over all monic polynomials $P_n$ of degree $n$. We refer the reader to \cite{NSZ21} for a recent review of  weighted Chebyshev polynomials. Let
	\begin{align}
		t_n(K,w):= \|wT_{n,w}\|_K
	\end{align}
and define the $n$-th \emph{Widom factor for the sup-norm} with respect to a bounded weight function $w$ on $K$ by
	\begin{align}
		W_{\infty,n}(K,w):= \frac{t_n(K,w)}{\ca(K)^n}.
	\end{align}
In addition, we define the Szeg\H{o} factor of $w$ on $K$ by
	\begin{align}\label{S-def}
		S(K,w):=\exp\biggl[{\int_K \log{w}\,d\mu_K}\bigg].
	\end{align}
By Jensen's inequality, $S(K,w) \le \int w d\mu_K$ and hence the Szeg\H{o} factor is always finite for bounded (and more generally integrable with respect to $d\mu_K$) weights. We say that $w$ is in the \emph{Szeg\H{o} class} on $K$ if $S(K,w)>0$. For example, every weight bounded away from zero (i.e., $w(x)\ge c>0$ for $x\in K$) is in the Szeg\H{o} class on $K$.
	
    \smallskip

	We refer to the case of constant weight $w=1$ on $K$ as the unweighted case. It is trivial to see that $S(1)=1$.
	When $K\subset \mathbb{C}$ (see \cite[Theorem~5.5.4]{Ran95}),
	\begin{equation}
		W_{\infty,n}(K,1)\geq 1,\,\,\, n\in\bb N,
	\end{equation}
	and when $K\subset \bb R$ (see \cite{Sch08})
	\begin{equation}\label{Schief-ineq}
		W_{\infty,n}(K,1)\geq 2,\,\,\, n\in\bb N.
	\end{equation}
	In the weighted case the corresponding lower bound has the following form (see \cite[Theorem~13]{NSZ21} and also \cite[Chapter~I, Theorem~3.6]{ST97}),
	\begin{equation}\label{cheb univ}
		W_{\infty,n}(K,w)\geq S(K,w),\,\,\, n\in\bb N.
	\end{equation}
	In general, the lower bound \eqref{cheb univ} cannot be improved. Indeed, if $K\subset \bb R$ then for each $n\in \bb N$,
	\begin{equation}
		\inf_w \frac{W_{\infty,n}(K,w)}{S(K,w)}=1,
	\end{equation}
	where the infimum is taken over all polynomials $w$ positive on $K$ (see \cite[Theorem~13]{NSZ21}).
	However, several examples of weights are known for which the following doubled lower bound holds,
	\begin{equation}\label{double s cheb}
		W_{\infty,n}(K,w)\geq 2 S(K,w),\,\,\, n\in\bb N.
	\end{equation}
	The constant weight gives one such example (cf.\ \eqref{Schief-ineq}). In addition, if $-1,1\in K$ and $K\subset [-1,1]$ then \eqref{double s cheb} holds for $w(x)=\sqrt{1-x^2}$ (see \cite{SZ21}) and for $w(x)=\sqrt{1-x}$ and $w(x)=\sqrt{1+x}$ (see \cite{Alp22}). In Theorem~\ref{LB-Thm}, we show that \eqref{double s cheb} holds for weights that are square roots of rational functions under an additional explicit condition on the zeros. We refer the reader to \cite[Lemma~2.12]{ELY24} for a recent relevant result. In Theorems~\ref{prod-w-thm} and \ref{prod-w-thm2}, we prove that \eqref{double s cheb} also holds for weights that are certain infinite products of rational functions.
	
    \smallskip

	In \cite[Theorem~11.5]{Wid69}, Widom proved the following asymptotic lower bound on a finite gap set $K\subset\bb R$,
	\begin{align}\label{Wid-ALB-Cheb}
		\liminf_{n\to\infty} W_{\infty,n}(K,w)\geq 2 S(K,w),
	\end{align}
	for weights $w$ that are Riemann integrable and bounded away from zero on $K$.
	In Theorem~\ref{ALB-Cheb-thm}, we extend this result to weights of the form $w = w_0w_1$, where $w_1$ is Riemann integrable and bounded away from zero on $K$ and $w_0$ is an arbitrary continuous weight with at most finitely many
zeros on $K$.
	
    \smallskip

	In the case of an interval $K=[-1,1]$, asymptotics of the Widom factors was first studied by Bernstein in \cite{Ber31}. Let
	\begin{equation}\label{w prod}
		w(x)= w_0(x)w_1(x) \mbox{ on } K,
	\end{equation}
	where $w_1$ is a Riemann integrable weight function bounded away from $0$ on $K$ and
	\begin{equation}
		w_0(x)=\prod_{j=1}^d |x-\alpha_j|^{\beta_j},
	\end{equation}
	where $d\in \bb N$ and $\alpha_j\in K$, $\beta_j>0$ for all $j$.
	Then, $w$ is in the Szeg\H{o} class on $K$ and (see \cite{Ber31} and also \cite[Theorem~5]{CER24})
	\begin{equation}\label{doubled limit cheb}
		\lim_{n\rightarrow\infty} W_{\infty,n}(K,w)=2 S(K,w).
	\end{equation}

	In \cite{LS87},  Lubinsky--Saff studied asymptotics of $L^p$ Widom factors on $K=[-1,1]$. It could be deduced from their work and the work of Widom \cite{Wid69} that Bernstein's asymptotics \eqref{doubled limit cheb} extends to upper semi-continuous weight functions $w$ satisfying $1/w \in L^p(K,dx)$ for all $p<\infty$. While this extension allows only mild zeros, it nevertheless suggests that the assumption of Riemann integrability in Bernstein's result is likely unnecessary for \eqref{doubled limit cheb}.
	
	In Theorem~\ref{Bern-extension}, we extend Bernstein's asymptotics result to weights with more general zeros. In particular, we show that \eqref{doubled limit cheb} holds for $w=w_0 w_1$, where $w_1$ is as above and $w_0$ is an arbitrary continuous weight with at most finitely many zeros on $K$. We pose no restrictions of the type of zeros and, in particular, we allow Szeg\H{o} class weights with ``strong zeros" such as $w(x)=\exp(-1/|x|^\alpha)$, $0<\alpha<1$, and also non-Szeg\H{o} class weights. In addition, we show that \eqref{doubled limit cheb} holds for certain continuous weights of the Szeg\H{o} class with infinitely many zeros on $K$. %weights that are square root of certain rational functions, infinite products of rational functions.
	
    \smallskip

	For a non-polar compact set $K$ in $\bb C$, let $d\mu=w\, d\mu_K$ where $w$ is a weight function on $K$ with $w\in L^1(K,d\mu_K)\bs\{0\}$. Let $P_n$ denote the $n$-th monic orthogonal polynomial associated with $\mu$. We define the $n$-th \emph{$L^2$ Widom factor} for $w$ on $K$ by
	\begin{equation}\label{l2 wid}
		W_{2,n}(K,w):=\frac{\|P_n\|_{L^2(\mu)}}{\ca(K)^n}.
	\end{equation}
	Then for $K\subset \bb C$ (see \cite{Alp19}, \cite{AZ20a}),
	\begin{equation}
		[W_{2,n}(K,1)]^2\geq 1, \,\,\, n\in \bb N,
	\end{equation}
	and for $K\subset \bb R$ (see \cite{AZ20a}),
	\begin{equation}
		[W_{2,n}(K,1)]^2\geq 2, \,\,\, n\in \bb N.
	\end{equation}
	Just like the $L^\infty$ case, there is a universal lower bound when $K\subset \bb C$ (see \cite{Alp19}, \cite{AZ20a}):
	\begin{equation}\label{good old uni}
		[W_{2,n}(K,w)]^2\geq S(K,w), \,\,\, n\in \bb N.
	\end{equation}
	The inequality \eqref{good old uni} cannot be improved in the following sense (see \cite[Theorem~2.2]{AZ20b}):
	If $K\subset \bb R$ then for each $n\in \bb N$,
	\begin{equation}
		\inf_w \frac{[W_{2,n}(K,w)]^2}{S(K,w)}=1,
	\end{equation}
	where the infimum is taken over all polynomials $w$ positive on $K$.
	However, several families of weights have been discovered with the improved lower bound
	\begin{equation}\label{improv l2}
		[W_{2,n}(K,w)]^2\geq 2 S(K,w), \,\,\, n\in \bb N.
	\end{equation}
	The examples with \eqref{improv l2} include the unweighted case (see \cite{AZ20a}), weights defined in terms isospectral torus (see \cite{AZ20a}), Jacobi weights for certain parameters (see \cite{AZ20a}) and generalized Jacobi measures (see \cite{Alp22}). We prove in Theorems~\ref{ALB-OP-Thm} and \ref{LB-OP-Thm} that \eqref{improv l2} holds for certain rational weights and infinite product of rational functions.
	
    \smallskip

	When $K$ is a finite gap set and $w$ is an arbitrary weight function in $L^1(K,d\mu_K)$, it follows from \cite[Theorem~12.3]{Wid69} that
	\begin{equation}\label{univ liminf l2}
		\liminf_{n\rightarrow\infty}[W_{2,n}(K,w)]^2\geq 2 S(K,w).
	\end{equation}
	There are also asymptotics results for $[W_{2,n}(K,w)]^2$ beyond finite gap sets \cite{Chr12}, \cite{PehYud03}.
	
	In Theorem~\ref{ALB-OP-Thm}, we show that for any non-polar compact $K\subset \bb R$ and arbitrary weight $w$, \eqref{univ liminf l2} holds. This is the best general result we can get as the inequality in \eqref{univ liminf l2} becomes equality when $K$ is an interval in view of Szeg\H{o}'s theorem, see \cite[Theorem~13.8.8]{Sim05}. The equality  holds in \eqref{univ liminf l2} if $K\subset \bb R$ is an inverse polynomial image of $[-1,1]$ and $w:=1$, see \cite[Theorem~4.4]{AZ20b}.
	
	Both $\liminf_{n\rightarrow\infty} W_{\infty,n}(K,1)$ and $\liminf_{n\rightarrow\infty}[W_{2,n}(K,1)]^2$ can be made arbitrarily large or even $\infty$, see \cite{GonHat15}, \cite{AlpGon16} and also Remark \ref{kgamma}. Hence the exact values for these limits are set dependent and might well exceed $2S(K,w)$.
	
    \smallskip

	The inequality \eqref{univ liminf l2} can be used to obtain a better lower bound than what was previously known for
	$\liminf_{n\rightarrow\infty} W_{\infty,n}(K,w)$ in the case of $K\subset \bb R$ and arbitrary bounded Szeg\H{o} class weight $w$. We have
	\begin{equation}
		\left(\frac{\|T_{n,w} w\|_{L^\infty(K)}}{\ca{K}^n}\right)^2\geq \frac{\int T_{n,w}^2 w^2 d\mu_K}{\ca(K)^{2n}}.
	\end{equation}
	Taking the $\liminf$ and using \eqref{univ liminf l2} we get
	\begin{equation}
		\liminf_{n\rightarrow\infty }[W_{\infty,n}(K,w)]^2\geq 2 S(K,w^2)
	\end{equation}
	and thus
	\begin{equation}\label{not that sharp}
		\liminf_{n\rightarrow\infty}[W_{\infty,n}(K,w)]\geq \sqrt{2} S(K,w),
	\end{equation}
	where we get an extra $\sqrt{2}$ term compared to what we would get if we take the liminf of both sides in \eqref{cheb univ}.
	
	It is quite likely that \eqref{not that sharp} does not give the optimal lower bound. The lower bound is a result of a simple trick to compare the norms of $L^2$ and $L^\infty$ extremal polynomials. On the other hand, in the well studied cases such as the case of an interval or a finite gap set $K$ with a sufficiently nice weight $w$ (see \cite{Wid69}) or in the unweighted case for more general compacts sets $K\subset\bb R$ (see \cite[Theorem~5.2]{CSZ5}), $2S(K,w)$ appears as the value of $\liminf_{n\rightarrow\infty}W_{\infty,n}(K,w)$. We also have several other examples in this article with the same property. Hence, we make the following conjecture:
	\begin{conjecture}
		Let $K$ be a non-polar compact subset of $\bb R$ and $w$ be an arbitrary bounded weight function on $K$. Then,
%in the Szeg\H{o} class of $K$.
		\begin{equation}
			\liminf_{n\rightarrow\infty} W_{\infty,n}(K,w)\geq 2 S(K,w).
		\end{equation}
	\end{conjecture}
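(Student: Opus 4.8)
The plan is to isolate the single factor of $\sqrt2$ separating the known bound \eqref{not that sharp} from the conjectured bound and to recover it from the oscillatory structure of the weighted Chebyshev polynomial. First note that the conjecture is nontrivial only for weights in the Szeg\H{o} class, since $S(K,w)=0$ makes the right-hand side vanish while $W_{\infty,n}(K,w)\ge0$ always; so assume $S(K,w)>0$. The starting point is the arbitrary-weight $L^2$ bound of Theorem~\ref{ALB-OP-Thm} applied to the weight $w^2$. Since $T_{n,w}$ is monic of degree $n$, comparing it with the monic $L^2(w^2\,d\mu_K)$ minimizer and using $S(K,w^2)=S(K,w)^2$ gives
\begin{equation}
\liminf_{n\to\infty}\frac{1}{\ca(K)^{2n}}\int_K (wT_{n,w})^2\,d\mu_K\ \ge\ \liminf_{n\to\infty}[W_{2,n}(K,w^2)]^2\ \ge\ 2\,S(K,w)^2.
\end{equation}
Introducing the normalized densities $f_n:=(wT_{n,w})^2/t_n(K,w)^2$, which satisfy $0\le f_n\le1$ and $\|f_n\|_K=1$, the displayed bound reads $\liminf_n t_n(K,w)^2\,\ca(K)^{-2n}\int_K f_n\,d\mu_K\ge 2S(K,w)^2$. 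Consequently the conjecture $\liminf_n t_n(K,w)^2\ca(K)^{-2n}\ge4S(K,w)^2$ would follow at once from the single averaging estimate
\begin{equation}\label{conj-key}
\limsup_{n\to\infty}\int_K f_n\,d\mu_K\ \le\ \tfrac12 .
\end{equation}

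This averaging bound is attained in the model case: for $K=[-1,1]$ and $w\equiv1$ one has $f_n=\cos^2(n\arccos x)$, the equilibrium measure is the arcsine distribution, and $\int_K f_n\,d\mu_K=\tfrac12$ exactly, reproducing $W_{\infty,n}=2$. The same value $\tfrac12$ appears in every benchmark case where \eqref{doubled limit cheb} is known, which is precisely why the two factors of $\sqrt2$---one from the real $L^2$ bound and one from the max-versus-average gain---multiply to the desired $2S(K,w)$. My plan to establish \eqref{conj-key} is to exploit the weighted Chebyshev alternation: $wT_{n,w}$ attains $\pm t_n(K,w)$ at a chain of alternation points separating the $n$ real zeros of $T_{n,w}$, so that on each arch $(wT_{n,w})^2$ rises from $0$ to its peak and returns, and the goal is to show that the $\mu_K$-average of each normalized arch is at most $\tfrac12+o(1)$. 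For weights for which the fine Szeg\H{o}--Widom asymptotics are available (as in the proof of Theorem~\ref{ALB-Cheb-thm}) one has $wT_{n,w}(x)=t_n(K,w)\bigl(\cos(n\Theta_n(x))+o(1)\bigr)$ with a phase $\Theta_n$ built from the harmonic conjugate of $n g_K$, under whose change of variables $\mu_K$ becomes uniform; then $\cos^2+\sin^2=1$ and the symmetry of $\cos^2$ yield \eqref{conj-key} directly.

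For arbitrary bounded weights the fine asymptotics are unavailable, and the robust route I would pursue is to produce a ``conjugate'' comparison function $G_n$ with $(wT_{n,w})^2+G_n^2\le t_n(K,w)^2(1+o(1))$ pointwise on $K$ and $\int_K G_n^2\,d\mu_K\ge(1-o(1))\int_K(wT_{n,w})^2\,d\mu_K$; adding and integrating then gives \eqref{conj-key}. A natural candidate for $G_n$ is the weighted imaginary part on $K$ of the analytic completion of $T_{n,w}$ across the complex Green function $g_K+i\widetilde g_K$, combined with equidistribution of the zero-counting measures of $T_{n,w}$ toward $\mu_K$. The main obstacle is precisely the construction and control of this conjugate---equivalently, the verification of \eqref{conj-key}---when $w$ has strong zeros, is merely bounded measurable rather than Riemann integrable, or when $K$ is irregular: in all these situations the arches of $wT_{n,w}$ may be distorted and the classical $n$-th root and ratio asymptotics that pin down $\Theta_n$ break down.

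An alternative, more elementary line of attack is monotone approximation from below. Since $\widetilde w\le w$ pointwise on $K$ forces $t_n(K,\widetilde w)\le t_n(K,w)$ and hence $W_{\infty,n}(K,\widetilde w)\le W_{\infty,n}(K,w)$, it would suffice to find minorants $\widetilde w\le w$ in the class already covered by Theorems~\ref{LB-Thm}, \ref{prod-w-thm} and \ref{ALB-Cheb-thm}---square roots of rational functions and their infinite products---with $S(K,\widetilde w)\to S(K,w)$. The infinite-product minorants are flexible enough to approach zero on large sets, which makes them far better suited to general Szeg\H{o} weights than minorants bounded away from zero. The obstruction here is that for a general bounded measurable $w$ the largest lower semicontinuous minorant can have a strictly smaller logarithmic integral---$w$ may dip to small values on a dense set of $\mu_K$-measure zero, collapsing $S(K,\widetilde w)$---so pointwise minorization need not preserve the Szeg\H{o} factor. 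Overcoming this would require minorization only off a set of small $\mu_K$-measure, together with an argument that such a small exceptional set cannot inflate $\int_K f_n\,d\mu_K$, which again returns to the averaging estimate \eqref{conj-key} as the essential difficulty.
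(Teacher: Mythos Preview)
The statement is presented in the paper as a \emph{conjecture}, not a theorem: the paper does not prove it, and offers only the partial results of Theorems~\ref{ALB-Cheb-thm} and~\ref{Bern-extension} (finite-gap $K$, restricted weights) together with the unconditional $\sqrt2$ bound \eqref{not that sharp}. So there is no ``paper's own proof'' to compare against.

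Your proposal is likewise not a proof, and you say so yourself. The reduction you give is correct: applying Theorem~\ref{ALB-OP-Thm} to $w^2$ and comparing $T_{n,w}$ with the $L^2(w^2\,d\mu_K)$ minimizer yields $\liminf_n t_n(K,w)^2\ca(K)^{-2n}\int_K f_n\,d\mu_K\ge 2S(K,w)^2$, and from this the conjecture does follow from the averaging bound \eqref{conj-key}. That reformulation is clean and isolates the missing half of the factor $2$ as a max-versus-average statement about the arches of $wT_{n,w}$. But you then correctly identify, and do not overcome, the two obstructions: for the conjugate-function route, the absence of Szeg\H{o}--Widom asymptotics for merely bounded measurable $w$ on irregular $K$ leaves no mechanism to build the companion $G_n$ or to control the arch shapes; for the minorant route, pointwise lower-semicontinuous approximation can collapse $S(K,\cdot)$ when $w$ dips on a dense null set, so one cannot in general reach $S(K,\widetilde w)\to S(K,w)$ with minorants in the rational-product class. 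These are exactly the reasons the paper leaves the statement as a conjecture. In short, there is no false step in your write-up; the essential step---establishing \eqref{conj-key} in full generality---is simply missing, as it is in the paper.
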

%The conjectured asymptotic lower bound is not known even in the case of an interval $K=[-1,1]$ for general bounded weights $w$.

	The plan of the paper is as follows. In Section 2, we discuss non-asymptotic lower bounds  and in Section 3, we discuss asymptotic lower bounds for $W_{\infty,n}(K,w)$. In Section 4, we prove a sharp asymptotic lower bound for $[W_{2,n}(K,w)]^2$ on $\bb R$ and also show that the improved non-asymptotic lower bound \eqref{improv l2} holds for some rational functions and infinite product of rational functions.
	
	%\begin{theorem}[{\cite[Theorem 5.2.5]{Ran95}}]
	%Let $q(z)=\sum_{j=0}^d a_j z^j$ be a polynomial of degree $d$ with $d\geq 1$. Let $K$ be a compact subset of $\bb C$. Then
	%\begin{align}
	%\ca{(q^{-1}(K))}=\left(\frac{\ca{(K)}}{|a_d|}\right)^{1/d}.
	%\end{align}
	%\end{theorem}

	%%%%%%%%%%%%%%%%%%%%%%%%%%%%%%%%%%%%%%%%%%%%%%%%%%%%%%%%%%%%%%%%%%%%%%%
	\section{Lower Bounds for $L^\infty$ norm}
	%%%%%%%%%%%%%%%%%%%%%%%%%%%%%%%%%%%%%%%%%%%%%%%%%%%%%%%%%%%%%%%%%%%%%%%
	
	In this section, we derive an $n$-independent lower bound on $W_{\infty,n}(K,w)$ for arbitrary regular compact sets $K\subset\bb R$ and weights of the form $w(x)=\sqrt{R(x)}$ where $R$ is a rational function bounded and non-negative on $K$. Earlier work on lower bounds for weighted Chebyshev polynomials include \cite{SZ21, NSZ21, Alp22}. In \cite[Appendix~A]{Ach56}, Akhiezer gave an explicit formula for weighted Chebyshev polynomials on $[-1,1]$ for the weights $w=1/\sqrt{P}$, where $P$ is a strictly positive polynomial on $[-1,1]$.
	
	We start by extending the well known relation between the capacity of a compact set and the capacity of its polynomial preimage set \cite[Theorem~5.2.5]{Ran95} to the case of a rational function preimage. %In the following we will assume that rational functions are defined at all points of the complex plane and take values in $\ol{\bb C}$.
	
	\begin{lemma}\label{CapPreImg}
		Let $K\subset\bb C$ be a regular compact set and let
		\begin{align}
			R(z) = c\frac{\prod_{j=1}^{d_0}(z-a_j)} {\prod_{j=1}^{d_1}(z-b_j)}
		\end{align}
		be a rational function with a pole at infinity of order $n=d_0-d_1\geq1$ and assume that the numerator and denominator of $R$ have no common zeros. Then the preimage set $L=R^{-1}(K)=\{z\in\bb C:R(z)\in K\}$
		is a regular compact set and
		\begin{align}\label{capKcapL}
			\ca(K) = |c|\,\ca(L)^n\exp\biggl[-\sum_{j=1}^{d_1}g_L(b_j)\bigg].
		\end{align}
	\end{lemma}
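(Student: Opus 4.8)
The plan is to transfer the capacity relation through an identity between Green functions. Write $R=P/Q$ with $P(z)=c\prod_{j=1}^{d_0}(z-a_j)$ and $Q(z)=\prod_{j=1}^{d_1}(z-b_j)$, the $b_j$ listed with multiplicity, so that $R\colon\overline{\bb C}\to\overline{\bb C}$ is a rational map of degree $d_0$ with a pole of order $n=d_0-d_1$ at $\infty$ and poles at the $b_j$. First I would settle the elementary topological facts. Since $|R(z)|\to\infty$ as $z\to\infty$ and as $z\to b_j$, while $K$ is bounded, $L=R^{-1}(K)$ is bounded and stays away from $\infty$ and from every $b_j$; as the preimage of a closed set under the (sphere-valued) continuous map $R$ it is closed, so $L$ is compact and $b_j,\infty\notin L$. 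To see that $L$ is non-polar I would note that $R(L)=K$: every $w\in K$ is a root of the degree-$d_0$ polynomial $P-wQ$, and no such root is a pole of $R$ (otherwise it would be a common zero of $P$ and $Q$), so each root lies in $L$. Since nonconstant rational maps send polar sets to polar sets, $L$ polar would force $K=R(L)$ polar, a contradiction; hence $\ca(L)>0$ and the Green functions $g_L$ (pole at $\infty$) and $g_L(\,\cdot\,,b_j)$ (pole at $b_j$) of $\Om_L$ all exist.

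The heart of the argument is the identity
\begin{equation}
g_K(R(z)) = n\,g_L(z) + \sum_{b_j\in\Om_L} g_L(z,b_j), \qquad z\in\Om_L,
\end{equation}
where repeated poles contribute repeatedly. To set it up I would first check that $R(\Om_L)\subseteq\Om_K$: the image $R(\Om_L)$ is a connected subset of $\overline{\bb C}\bs K$ containing $R(\infty)=\infty$, hence lies in the component $\Om_K$. Consequently $G:=g_K\circ R$ is nonnegative and harmonic on $\Om_L$ away from the points where $R=\infty$, namely $\infty$ (a pole of $R$ of order $n$, where $G\sim n\log|z|$) and the $b_j\in\Om_L$ (where $G\sim-\log|z-b_j|$, matching the singularity of $g_L(\,\cdot\,,b_j)$). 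Subtracting, the difference of the two sides extends harmonically across $\infty$ and the $b_j$ and is bounded on $\Om_L$. For the boundary values, regularity of $K$ makes $g_K$ continuous and vanishing on $K$, so $G(z)\to g_K(R(z_0))=0$ at every $z_0\in\partial\Om_L\subseteq L$ (here $z_0\neq b_j$), while $g_L$ and $g_L(\,\cdot\,,b_j)$ vanish quasi-everywhere on $\partial\Om_L$; thus the difference tends to $0$ q.e.\ on the boundary and, being bounded and harmonic, vanishes identically by the maximum principle, polar exceptional sets being negligible.

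Two consequences then finish the proof. Regularity of $L$ is immediate: since the finite-pole terms are nonnegative, the identity gives $0\le g_L(z)\le\tfrac1n g_K(R(z))\to0$ as $z\to z_0$ for every $z_0\in\partial\Om_L$, so every boundary point is regular. For the capacity formula I would let $z\to\infty$ in the identity and match asymptotics: by \eqref{GrAs}, $g_K(R(z))=n\log|z|+\log|c|-\log\ca(K)+o(1)$ and $n\,g_L(z)=n\log|z|-n\log\ca(L)+o(1)$, while $g_L(z,b_j)\to g_L(\infty,b_j)=g_L(b_j)$ by symmetry of the Green function. Cancelling $n\log|z|$ and equating the constant terms yields $\log\ca(K)=\log|c|+n\log\ca(L)-\sum_{b_j\in\Om_L}g_L(b_j)$, which is exactly \eqref{capKcapL} once one observes that $g_L(b_j)=0$ for the poles lying outside $\Om_L$.

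The main obstacle is the identity itself: correctly matching the logarithmic singularities at the finite poles $b_j$ (each of the appropriate order), justifying the boundary behavior from regularity of $K$ together with the maximum principle modulo a polar exceptional set, and carrying out the bookkeeping of which $b_j$ actually lie in $\Om_L$ as opposed to a bounded complementary component, where $g_L$ vanishes. The polynomial case $d_1=0$ of \cite[Theorem~5.2.5]{Ran95} is recovered, the sum being empty.
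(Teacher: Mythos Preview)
Your proof is correct and follows essentially the same route as the paper: both establish the identity $g_K(R(z)) = n\,g_L(z) + \sum_j g_L(z,b_j)$ on $\Om_L$ by cancelling logarithmic singularities and invoking the maximum principle with q.e.\ zero boundary values, then read off regularity of $L$ from the inequality $g_L\le\tfrac1n\,g_K\circ R$ and the capacity relation from the constant term at $\infty$. You add a bit of extra care (non-polarity of $L$, and the bookkeeping of which $b_j$ lie in $\Om_L$ versus a bounded complementary component), but the argument is the same in substance.
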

	\begin{proof}
		By assumption $R(\infty)=\infty$ and $K$ is compact hence the preimage set $L$ is compact.
		Let $\Om_K$ and $\Om_L$ be the unbounded components of $\ol{\bb C}\bs K$ and $\ol{\bb C}\bs L$, respectively. Then $R(\Om_L)=\Om_K$ and $R(\pd\Om_L)=R(\pd\Om_K)$. Let $g_K$ and $g_L$ be the Green functions on the domains $\Om_K$ and $\Om_L$, respectively, and consider the function
		\begin{align}
			h(z) = g_K(R(z)) - ng_L(z) - \sum_{j=1}^{d_1} g_L(z,b_j).
		\end{align}
		Note that all the logarithmic poles at $\infty,b_1,\dots,b_{d_1}$ cancel, so $h$ extends to a harmonic function on $\Om_L$. Since $h$ has zero boundary values q.e.\ on $\pd\Om_L$, it follows from the maximum principle that $h\equiv0$ on $\Om_L$. By assumption, $K$ is regular, so $g_K(R(z))\to0$ as $z\to z_0$ for each $z_0\in\pd\Om_L$. Since $g_L(z)$ and $g_L(z,b_j)$ for all $j$ are non-negative on $\Om_L$ it follows that $g_L(z)\to 0$ as $z\to z_0$ for each $z_0\in\pd\Om_L$, that is, $L$ is regular.
		In addition, we have
		\begin{align}
			0=h(\infty)=\log|c|-\log\ca(K)+n\log\ca(L)-\sum_{j=1}^{d_1}g_L(b_j)
		\end{align}
		which implies \eqref{capKcapL}.
	\end{proof}
	
	We will also need the following variant of \cite[Lemma~13]{SZ21} for compact subsets of the complex plane.
	
	\begin{lemma}\label{Lem:K=L}
		Let $K\subset L$ be two compact subsets of $\bb C$ such that $L$ is regular, $K$ is polynomially convex, and $\ca(K)=\ca(L)$. Then $K=L$.
	\end{lemma}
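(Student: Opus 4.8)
The plan is to compare the equilibrium measures and Green functions of $K$ and $L$, reserving the two structural hypotheses for the very last step. Since $L$ is regular it is non-polar, so $\ca(K)=\ca(L)>0$ and the equilibrium measures $\mu_K,\mu_L$ both exist. The first step is to show $\mu_K=\mu_L$. Because $K\subseteq L$, the probability measure $\mu_K$ is supported in $L$ and is therefore an admissible competitor in the energy minimization problem defining $\mu_L$. Its logarithmic energy equals $-\log\ca(K)=-\log\ca(L)$, which is exactly the minimal energy over probability measures on $L$. By uniqueness of the energy minimizer on a non-polar compact set, I would conclude $\mu_K=\mu_L$.

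Next I would feed this into the representation \eqref{GrFn}. Since $\mu_K=\mu_L$ and $\ca(K)=\ca(L)$, formula \eqref{GrFn} produces the same function for both sets, so $g_K(z)=g_L(z)$ for every $z\in\bb C$; that is, the two Green functions coincide identically, not merely quasi-everywhere. This identity of Green functions is the real engine of the argument.

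The final step converts $g_K\equiv g_L$ into set equality, and here the two remaining hypotheses enter, each in a distinct role. Suppose toward a contradiction that there is a point $p\in L\bs K$. On one hand, regularity of $L$ forces $g_L$ to vanish at every point of $L$, so $g_L(p)=0$. On the other hand, polynomial convexity of $K$ means $\ol{\bb C}\bs K$ has no bounded components, hence $\ol{\bb C}\bs K=\Om_K$ and so $p\in\Om_K\bs\{\infty\}$. There the Green function is strictly positive, $g_K(p)>0$, by the minimum principle: a nonnegative harmonic function on $\Om_K$ vanishing at an interior point would vanish identically, contradicting its logarithmic pole at $\infty$. This contradicts $g_K(p)=g_L(p)$, so $L\bs K=\emptyset$ and, combined with $K\subseteq L$, we get $K=L$.

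The point requiring the most care is this last step, where the two hypotheses are genuinely indispensable. Regularity of $L$ is what upgrades the merely quasi-everywhere vanishing of $g_L$ on $L$ to vanishing at the specific point $p$, while polynomial convexity of $K$ is what guarantees $p$ lies in the unbounded component $\Om_K$, where $g_K>0$, rather than in a bounded hole of the complement, where $g_K$ would also vanish and collapse the argument. Establishing $\mu_K=\mu_L$ and then $g_K\equiv g_L$ is routine potential theory; the content of the lemma is precisely that these two hypotheses together localize the comparison to a single arbitrary point of $L\bs K$.
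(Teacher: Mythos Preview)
Your proof is correct and follows essentially the same outline as the paper's: both arguments reduce to showing that $g_L$ is strictly positive off $K$ via the minimum principle, then combine regularity of $L$ (so $g_L=0$ on $L$) with polynomial convexity of $K$ (so $\bb C\bs K=\Om_K$) to conclude $L\subset K$.

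The one noteworthy difference is in the opening move. The paper invokes an external result (\cite[Proposition~2.1]{CSZ3}) to obtain $\supp(\mu_L)\subset K$, and then works with $g_L$ on the unbounded component of $\bb C\bs\supp(\mu_L)$. You instead observe directly that $\mu_K$ is an admissible competitor for the energy problem on $L$ achieving the minimal energy, so uniqueness of the equilibrium measure forces $\mu_K=\mu_L$ and hence $g_K\equiv g_L$ via \eqref{GrFn}. This is a clean, self-contained replacement for the citation and in fact yields a slightly stronger intermediate statement (equality of measures rather than mere support inclusion). After this step the two proofs are the same in substance, only differing cosmetically in whether one argues via $g_L$ on $\Om\supset\Om_K$ or via $g_K$ on $\Om_K$ directly.
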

	\begin{proof}
		We only need to show $L\subset K$. Since $L$ is regular we have $\ca(L)>0$. Then, by \cite[Proposition~2.1]{CSZ3}, $\supp(\mu_L) \subset K$, where $\mu_L$ denotes the equilibrium measure of $L$. Let $\Om$ be the unbounded component of $\bb C\bs\supp(\mu_L)$ and $\Om_K^\prime$ be the unbounded component of $\bb C\bs K$. Then $\Om\supset \Om_K^\prime$. Since $K$ is polynomially convex, we have $K=\bb C\bs\Om_K^\prime$ and hence $\bb C\bs\Om\subset K$.
		It follows from \eqref{GrFn} that $g_L(z)$ is a harmonic function on $\Om$. Since $g_L$ is non-negative and non-constant on $\Om$, it must be strictly positive on $\Om$ by the minimum principle for harmonic functions. Then since $L$ is regular, we have $g_L=0$ on $L$ and hence $L\subset\bb C\bs\Om$. Thus, $L\subset K$.
	\end{proof}
	
	Our first main result of this section is the following non-asymptotic lower bound.
	
	\begin{theorem}\label{LB-Thm}
		Let $K\subset\bb R$ be a compact non-polar set and let
		\begin{align}
			R(z) = c\frac{\prod_{j=1}^{d_0}(z-a_j)} {\prod_{j=1}^{d_1}(z-b_j)}
		\end{align}
		be a rational function such that $R$ is bounded, non-negative, and not identically zero on $K$. Define $d=d_1-d_0$ and $w=\sqrt{R}$ on $K$. Then, for all $n>d/2$,
		\begin{align}\label{LB1-Cheb}
			W_{\infty,n}(K,w)\geq 2S(K,w) \exp\biggl[ -\frac12\sum_{j=1}^{d}g_K(a_j) \bigg].
		\end{align}
		In addition, assume that the zeros of $R$ are regular points of $K$. Then for all $n>d/2$,
        \begin{align}\label{LB2-Cheb}
			W_{\infty,n}(K,w)\geq 2 S(K,w).
		\end{align}
        Furthermore, the equality in \eqref{LB2-Cheb} is attained for some $n>d/2$ if and only if there is a polynomial $Q_n$ of degree $n$ such that its zeros are disjoint from $\{b_j\}_{j=1}^{d_1}$ and
		\begin{equation}\label{rat inv1}
			K=(R Q_n^2)^{-1}([0,1]).
		\end{equation}
		In that case, $Q_n=r T_{n,w}$, where $r$ is a non-zero real number.
	\end{theorem}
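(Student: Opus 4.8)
The plan is to compare $K$ with the preimage $L:=G^{-1}([0,1])$ of the rational map $G:=R\,T_{n,w}^2/t_n^2$. On $K$ we have $0\le R\,T_{n,w}^2=(wT_{n,w})^2\le t_n^2$, so $G(K)\subseteq[0,1]$ and hence $K\subseteq L$. The numerator and denominator of $G$ have degrees $2n+d_0$ and $d_1$, so $G$ has a pole at infinity of order $2n-d$ with its finite poles among $\{b_j\}_{j=1}^{d_1}$; the hypothesis $n>d/2$ is exactly what makes this order positive, which is what is needed to apply Lemma~\ref{CapPreImg} to $G$ with the regular base set $[0,1]$. Since $\ca([0,1])=\frac14$ and the leading coefficient of $G$ is $c/t_n^2$, that lemma gives
\[
t_n^2=4|c|\,\ca(L)^{2n-d}\exp\Bigl[-\sum_{j=1}^{d_1}g_L(b_j)\Bigr],
\]
where a pole $b_j$ cancelled by a zero of $T_{n,w}$ is simply omitted from the sum; since each term is nonnegative, this omission only helps the inequality below.

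Next I would use monotonicity under inclusion: from $K\subseteq L$ we get $\ca(L)\ge\ca(K)$ and $g_L\le g_K$ on $\Om_L$, while each $g_K(b_j)\ge0$. Since $2n-d>0$, these turn the last display into $t_n^2\ge 4|c|\,\ca(K)^{2n-d}\exp[-\sum_{j=1}^{d_1}g_K(b_j)]$. On the other hand, rewriting \eqref{GrFn} as $\int_K\log|\,\cdot-p|\,d\mu_K=g_K(p)+\log\ca(K)$ and substituting into \eqref{S-def} gives
\[
2\log S(K,w)=\int_K\log R\,d\mu_K=\log|c|+\sum_{j=1}^{d_0}g_K(a_j)-\sum_{j=1}^{d_1}g_K(b_j)-d\log\ca(K).
\]
Dividing the previous inequality by $\ca(K)^{2n}$ and inserting this identity yields $W_{\infty,n}(K,w)^2\ge 4S(K,w)^2\exp[-\sum_{j=1}^{d_0}g_K(a_j)]$, which is \eqref{LB1-Cheb}. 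If moreover every zero $a_j$ is a regular point of $K$, then $g_K(a_j)=0$ for all $j$ and \eqref{LB2-Cheb} follows at once.

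For the equality case I would trace when the two inequalities above are simultaneously tight. Equality in \eqref{LB2-Cheb} forces $\ca(L)=\ca(K)$ and $g_L(b_j)=g_K(b_j)$ at the surviving poles, and in addition $g_K(b_j)=0$ at every pole cancelled by a zero of $T_{n,w}$; the latter cannot happen, since $K\subset\bb R$ has connected complement and hence $g_K>0$ off $K$, while $b_j\notin K$. Thus no cancellation occurs, i.e.\ $T_{n,w}(b_j)\ne0$ for all $j$. The equality $\ca(K)=\ca(L)$, together with $K\subseteq L$, the regularity of $L$ from Lemma~\ref{CapPreImg}, and the polynomial convexity of $K$ as a compact subset of $\bb R$, gives $K=L$ by Lemma~\ref{Lem:K=L}. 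Setting $Q_n:=T_{n,w}/t_n$ (degree $n$, with zeros off $\{b_j\}$) we have $RQ_n^2=G$, whence $K=(RQ_n^2)^{-1}([0,1])$, which is \eqref{rat inv1}.

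Conversely, given such a $Q_n$, I would apply Lemma~\ref{CapPreImg} to $\Phi:=RQ_n^2$, whose denominator is exactly $\prod_{j=1}^{d_1}(z-b_j)$ because the zeros of $Q_n$ avoid $\{b_j\}$, with preimage $\Phi^{-1}([0,1])=K$; this gives $|\mathrm{lc}(Q_n)|^{-2}=4|c|\,\ca(K)^{2n-d}\exp[-\sum_{j=1}^{d_1}g_K(b_j)]=4S(K,w)^2\ca(K)^{2n}$. Writing $\widetilde Q_n:=Q_n/\mathrm{lc}(Q_n)$ for the monic normalization and using $RQ_n^2\le1$ on $K$, we get $\|w\widetilde Q_n\|_K\le|\mathrm{lc}(Q_n)|^{-1}=2S(K,w)\ca(K)^n$, so $t_n\le 2S(K,w)\ca(K)^n$ and hence $W_{\infty,n}(K,w)\le 2S(K,w)$; with \eqref{LB2-Cheb} this is equality. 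The chain then forces $\|w\widetilde Q_n\|_K=t_n$, so $\widetilde Q_n$ is a minimizer and, by uniqueness of $T_{n,w}$, $\widetilde Q_n=T_{n,w}$, i.e.\ $Q_n=r\,T_{n,w}$ with $r=\mathrm{lc}(Q_n)\ne0$. The main difficulty I anticipate is precisely this equality analysis: separating the two independent sources of slack (capacity versus the pole terms) and, above all, ruling out cancellation between the zeros of $T_{n,w}$ and the poles $b_j$, which is what allows a clean application of Lemma~\ref{Lem:K=L} and the identification of the extremal polynomial.
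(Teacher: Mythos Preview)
Your argument is correct and mirrors the paper's almost step for step: form the preimage $L=(RT_{n,w}^2/t_n^2)^{-1}([0,1])$, apply Lemma~\ref{CapPreImg} and the monotonicity coming from $K\subseteq L$ to obtain \eqref{LB1-Cheb}--\eqref{LB2-Cheb}, then invoke Lemma~\ref{Lem:K=L} together with the no-cancellation observation for the equality characterization. The only points you leave implicit that the paper spells out are the reduction to coprime numerator and denominator of $R$ and the fact that $T_{n,w}$ has real coefficients (needed to justify $RT_{n,w}^2=(wT_{n,w})^2\ge0$ on $K$); your ordering of the inequality chain in the equality analysis---first pass from $g_L$ to $g_K$, then add back the cancelled poles so that strictness reduces to $g_K(b_j)>0$---is equivalent to, and arguably a little cleaner than, the paper's.
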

	\begin{proof}
		Without loss of generality we may assume that the numerator and denominator of $R$ have no common zeros. By assumption,
		\begin{align}
			w(x)= \sqrt{R(x)}= |c|^{1/2} \frac{\prod_{j=1}^{d_0}|x-a_j|^{1/2}} {\prod_{j=1}^{d_1}|x-b_j|^{1/2}},
		\end{align}
		hence using \eqref{S-def} and \eqref{GrFn} we obtain,
		\begin{align}\label{Sw}
			S(K,w) &= \exp{\biggl[ \int \log{w}\,d\mu_K \bigg]}
			\no \\
			&= |c|^{1/2} \exp{\biggl[
				\frac12\sum_{j=1}^{d_0} \int\log{|x-a_j|}\, d{\mu_K}(x) -
				\frac12\sum_{j=1}^{d_1} \int\log{|x-b_j|}\, d{\mu_K}(x) \bigg]}
			\no \\
			&=|c|^{1/2} \exp{\biggl[
				\frac12 \sum_{j=1}^{d_0} \big(g_K(a_j)+\log\ca(K)\big) -
				\frac12 \sum_{j=1}^{d_1} \big(g_K(b_j)+\log\ca(K)\big) \bigg]}
			\no \\
			&=|c|^{1/2} \ca(K)^{-d/2} \exp\biggl[
			\frac12 \sum_{j=1}^{d_0}g_K(a_j) -
			\frac12 \sum_{j=1}^{d_1}g_K(b_j) \bigg].
		\end{align}
		
		Next note that $(wT_{n,w})^2=RT_{n,w}^2$ is a rational function. We cancel any common zeros in the numerator and the denominator of $RT_{n,w}^2$ and assume without loss of generality that after the cancellation,
		%the degree of the polynomial in the denominator is $d_1^\prime$, where
		the remaining zeros of the denominator (repeated according to their multiplicities) are $\{b_j\}_{j=1}^{d_1'}$ where
		\begin{equation}\label{d1 prime}
			0\leq d_1^\prime \leq d_1.
		\end{equation}
		Define the rational preimage set
		\begin{align}\label{Ln}
			L_n &= \big(RT_{n,w}^2\big)^{-1}\big([0, t_n(K,w)^2]\big)
			\no\\
			&= \big\{ z\in\bb C: R(z)T_{n,w}^2(z) \in [0, t_n(K,w)^2] \big\}.
		\end{align}
		Since $\Re[T_{n,w}(x)]$ is a monic polynomial of degree $n$ and $|\Re[T_{n,w}(x)]| \le |T_{n,w}(x)|$ for all $x\in\bb R$, it follows that $T_{n,w}=\Re[T_{n,w}]$ so $T_{n,w}$ is a real polynomial. Since $w$ is also real on $K$ we have  $w(x)T_{n,w}(x)\in[-t_n(K,w),t_n(K,w)]$ for all $x\in K$. It follows that $K\subset L_n$ and hence
		\begin{align}\label{cap-g-ineq}
			\ca(K) \le \ca(L_n) \quad\text{and}\quad g_{L_n} \le g_K \text{ on } \bb C.
		\end{align}
		Since for $n > d/2$ the rational function $RT_{n,w}^2$ has a pole at infinity of order $2n-d$, it follows from Lemma~\ref{CapPreImg} that
		\begin{align}\label{some cap}
			\frac14t_n(K,w)^2 = \ca\big([0, t_n(K,w)^2]\big)
			= |c|\, \ca(L_n)^{2n-d} \exp\biggl[-\sum_{j=1}^{d_1'}g_{L_n}(b_j)\bigg].
		\end{align}
		Then using \eqref{d1 prime}, we obtain
		\begin{align}\label{tn2capL}
			t_n(K,w) &\ge 2 |c|^{1/2} \ca(L_n)^{n-d/2} \exp\biggl[-\frac12\sum_{j=1}^{d_1}g_{L_n}(b_j)\bigg],
		\end{align}
		where the inequality is strict if and only if some $b_j$ is a zero of $T_{n,w}$.
		Now it follows from \eqref{tn2capL}, \eqref{cap-g-ineq}, and \eqref{Sw} that for all $n>d/2$,
		\begin{align}\label{tn2capL2}
			W_{\infty,n}(K,w) = \frac{t_n(K,w)}{\ca(K)^{n}}
			&\geq 2|c|^{1/2} \ca(K)^{-d/2} \exp\biggl[-\frac12\sum_{j=1}^{d_1}g_K(b_j)\bigg]
			\no \\
			& = 2S(K,w) \exp\biggl[-\frac12\sum_{j=1}^{d_0}g_K(a_j)\bigg],
		\end{align}
		so the lower bound \eqref{LB1-Cheb} holds.

        When all the $a_j$'s are regular points of $K$, we have $g_K(a_j)=0$ for each $j$ and hence \eqref{LB2-Cheb} follows from \eqref{LB1-Cheb}.
		
		It remains to show that the equality in \eqref{LB2-Cheb} is equivalent to \eqref{rat inv1}. First assume that, $W_{\infty,n}(K,w)= 2 S(K,w).$ Then the inequality in \eqref{tn2capL2} becomes equality since by assumption $g_K(a_j)=0$ for all $j$. This is possible only if the inequalities in \eqref{tn2capL} and \eqref{cap-g-ineq} become equalities. Thus, the zeros of $T_{n,w}$ are disjoint from $\{b_j\}_{j=1}^{d_1}$ and $\ca(K)=\ca(L_n)$.
		Since $K\subset L_n$, $L_n$ is regular, and $K$ is polynomially convex due to $K\subset\bb R$, it follows from Lemma~\ref{Lem:K=L} that $K=L_n$.
		In view of \eqref{Ln}, choosing $Q_n=\frac{1}{t_n{(K,w)}}T_{n,w}$ then yields \eqref{rat inv1}.
		
		Conversely, assume \eqref{rat inv1} holds for some polynomial $Q_n$ of degree $n$ whose zeros do not include any element of $\{b_j\}_{j=1}^{d_1}$. Since $\sqrt{R}$ takes non-negative real values on $K$ and $K\subset \bb R$, it follows that $Q_n= r S_n$ where $r$ is a non-zero real number and $S_n$ is a monic polynomial with real coefficients. Hence $K=(R S_n^2)^{-1}([0,1/r^2])$. Then, by Lemma~\ref{CapPreImg}, we have
		\begin{equation}\label{iff ineq}
			\frac{1}{4r^2} = |c|\,\ca(K)^{2n-d}\exp\biggl[-\sum_{j=1}^{d_1}g_K(b_j)\bigg].
		\end{equation}
		Combining \eqref{iff ineq}, \eqref{tn2capL}, and \eqref{cap-g-ineq}, we get
		\begin{equation}
			\frac{1}{r^2}\leq t_n(K,w)^2.
		\end{equation}
		This implies that $\|w S_n\|_K \leq t_n(K,w)$. Since $S_n$ is monic, it follows from the uniqueness of the weighted Chebyshev polynomial that $S_n=T_{n,w}$. Then $K=L_n$ and since the zeros of $T_{n,w}=\frac1r Q_n$ do not include any elements of $\{b_j\}_{j=1}^{d_1}$, the inequality in \eqref{tn2capL} and hence in \eqref{tn2capL2} become equality. Thus, $W_{\infty,n}(K,w)= 2 S(K,w)$ holds.
	\end{proof}

	\begin{remark}
		\begin{enumerate}[(a)]
			%\item We always have $W_{\infty,n}(K,w)\geq S(K,w)$, see \cite[Theorem~13]{NSZ21}.
			
			\item The theorem applies to rational weights $w(x)=|R(x)|$ for any  complex rational function $R$ which is not identically zero since we can write $w(x)=[R(x)\ol{R}(x)]^{1/2}$, where $\ol{R}(x)$ denotes the rational function $R(x)$ with complex conjugated coefficients. Then using the symmetry $g_K(\bar z)=g_K(z)$ one obtains for all $n>d$,
			\begin{align}\label{LB3-Cheb}
				&W_{\infty,n}(K,w)\geq 2S(K,w) \exp\biggl[ -\sum_{j=1}^{d_0}g_K(a_j) \bigg],
				\\ \label{Sw-rat}
				&S(K,w) = |c|\, \ca(K)^{-d} \exp\biggl[ \sum_{j=1}^{d_0}g_K(a_j) - \sum_{j=1}^{d_1}g_K(b_j) \bigg],
			\end{align}
			where $\{a_j\}_{j=1}^{d_0}$ and $\{b_j\}_{j=1}^{d_1}$ are the zeros and poles of $R$ on $\bb C$, repeated according to their multiplicities, $d=d_1-d_0$, and $c$ is such that $R(z)\sim cz^{-d}$ at infinity.
			
			In the special case of rational functions $R$ that take real values on $K$, the lower bound \eqref{LB3-Cheb} can also be deduced from \eqref{Sw-rat} and the analog of Bernstein--Walsh lemma for real rational functions \cite[Lemma~2.12]{ELY24}.
			
			\item Instead of the square root weight, one can consider $m$-th root of a rational function, $w=\sqrt[m]{R}$, $m\in\bb N$. In this case, a similar proof yields for all $n>d/2$,
			\begin{align}\label{LB4-Cheb}
				W_{\infty,n}(K,w)
				&\geq A^{1/m}\, |c|^{1/m}\, \ca(K)^{-d/m} \exp\biggl[-\frac1m\sum_{j=1}^{d_1}g_K(b_j)\bigg]
				\no \\
				&= A^{1/m}\, S(K,w) \exp\biggl[ -\frac1m \sum_{j=1}^{d_0}g_K(a_j) \bigg],
			\end{align}
			where $A=2$ if $m$ is odd and $A=4$ if $m$ is even. Thus, our approach may lead to the optimal lower bound \eqref{LB2-Cheb} only in the cases $m=1$ and $m=2$.
			
			\item The theorem shows that the optimal lower bound \eqref{LB2-Cheb} holds for the Jacobi weights $w_{\al,\be}(x)=(1-x)^\al(1+x)^\be$ on $K=[-1,1]$ with $\al,\be\in\frac12\bb N_0$.
			
			A simple calculation shows that \eqref{LB2-Cheb} does not hold for the Jacobi weights $w_{\al,\al}$ with $\al\in(0,\frac12)$ and $n=1$ since
			\begin{align}
				W_{\infty,1}(K,w_{\al,\al}) \le \frac{\|xw_{\al,\al}\|_K}{\ca(K)} = \frac{2(2\al)^\al}{(1+2\al)^{\al+\frac12}} < \frac2{2^{2\al}} = 2S(K,w_{\al,\al}).
			\end{align}
			
			%\item Since $\|wP_n\|_K \ge \ca(K)^nW_{\infty,n}(K,w)$ for any monic polynomial $P_n$ of degree $n$, \eqref{LB1-Cheb}, \eqref{LB2-Cheb}, \eqref{LB3-Cheb}, \eqref{LB4-Cheb} provide lower bounds on $\|wP_n\|_K$ for arbitrary monic polynomials $P_n$ of degree $n$.
			
		\end{enumerate}
	\end{remark}	
	
	Our next goal is to show that the optimal lower bound \eqref{LB2-Cheb} extends to some weights with infinitely many zeros on $K$.
	As a preliminary step, we'll discuss uniform convergence of infinite products of rational functions.
	As is customary, we'll denote by $\prod_{j=1}^\infty f_j$ the limit of the partial products $\prod_{j=1}^k f_j$ if it exists.
	To start, we recall a well known result about convergence of infinite products.

	\begin{lemma}\label{prod-conv}
		Let $\{f_j(z)\}_{j=1}^\infty$ be a sequence of non-vanishing, continuous functions on a set $U$ such that $\sum_{j=1}^\infty(1-f_j(z))$ converges absolutely and locally uniformly on $U$ (i.e., on compact subsets of $U$). Then the infinite product $F(z)=\prod_{j=1}^\infty f_j(z)$ converges locally uniformly on $U$ to a finite, non-vanishing, continuous function $F(z)$ on $U$.
	\end{lemma}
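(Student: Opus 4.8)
The plan is to reduce the convergence of the product to the locally uniform, absolutely convergent series of logarithms that the hypothesis essentially hands us, taking care to separate off a finite initial block before passing to logarithms. First I would fix an arbitrary compact set $E\subset U$ and use the uniform convergence of $\sum_j(1-f_j)$ on $E$ to choose an index $N$ so that the tail satisfies $\sum_{j>N}\|1-f_j\|_E<\tfrac12$. This guarantees $|f_j(z)-1|<\tfrac12$ for all $j>N$ and all $z\in E$, so that $\log f_j$ (principal branch) is well defined and continuous on $E$ for $j>N$. I would then split the partial products for $k>N$ as
\begin{align}
\prod_{j=1}^k f_j(z) = \Bigl(\prod_{j=1}^N f_j(z)\Bigr)\exp\Bigl(\sum_{j=N+1}^k \log f_j(z)\Bigr),
\end{align}
isolating a fixed finite product of continuous, non-vanishing functions from an exponentiated tail sum.

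Next I would control the tail sum using the elementary estimate $|\log(1+w)|\le 2|w|$, valid for $|w|\le\tfrac12$, applied with $w=f_j-1$. This gives $|\log f_j(z)|\le 2\,|1-f_j(z)|$ on $E$ for $j>N$, so the tail series $\sum_{j>N}\log f_j$ is dominated termwise by $2\sum_{j>N}|1-f_j|$, which converges uniformly on $E$ by hypothesis. Hence $\sum_{j>N}\log f_j$ converges absolutely and uniformly on $E$ to a continuous function $h(z)$, and by continuity of the exponential the exponentiated tail converges uniformly on $E$ to $\exp(h(z))$, which is continuous and non-vanishing.

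Combining the two pieces, the partial products converge uniformly on $E$ to $\bigl(\prod_{j=1}^N f_j\bigr)\exp(h)$. This limit is continuous, finite, and non-vanishing: the finite factor is a product of finitely many non-vanishing continuous functions $f_j$, while $\exp(h)$ is everywhere finite and never vanishes. Since $E$ was an arbitrary compact subset of $U$, the product $F=\prod_{j=1}^\infty f_j$ converges locally uniformly on $U$ to a finite, non-vanishing, continuous function, as claimed.

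The only point requiring real care is the passage to logarithms: the principal branch is available only once the terms are uniformly close to $1$, which is exactly why the initial block $\prod_{j=1}^N f_j$ must be separated off and handled directly rather than logarithmically. With that separation in place the remainder is a routine uniform-convergence bookkeeping argument, so I do not expect any genuine obstacle beyond verifying the elementary inequality $|\log(1+w)|\le 2|w|$ for $|w|\le\tfrac12$.
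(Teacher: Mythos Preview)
Your argument is correct and is the standard textbook proof of this classical fact. The paper itself does not give a proof of this lemma at all: it simply introduces it with ``we recall a well known result about convergence of infinite products'' and then moves directly to the next theorem, so there is no paper proof to compare against beyond noting that your approach is exactly the one any reference would supply.
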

	
	\begin{theorem}\label{prod-w-thm}
		Let $K\subset\bb R$ be a regular compact set and suppose $\{a_j\}_{j=1}^\infty\subset K$ and $\{b_j\}_{j=1}^\infty\subset\bb C\bs K$ are sequences such that
        $\sum_{j=1}^\infty g_K(b_j)<\infty$,
		$\sum_{j=1}^\infty |b_j-a_j|<\infty$,
		$\big\|\frac{x-a_j}{x-b_j}\big\|_K\le 1$ for every $j\ge1$, and the set $A=\{a_j\}_{j=1}^\infty$ is closed.
		Then the infinite product
		\begin{align}\label{prod-w}
			w(x) = \prod_{j=1}^\infty\Big|\frac{x-a_j}{x-b_j}\Big|
		\end{align}
		converges uniformly on $K$ to a weight function $w$ such that $w$ is continuous and bounded by $1$ on $K$, $w$ is positive on $K\bs A$, $S(K,w)>0$ (i.e., $w$ is in the Szeg\H{o} class on $K$), and for all $n\ge1$,
		\begin{align}\label{LB5-Cheb}
			W_{\infty,n}(K,w)\ge 2S(K,w).
		\end{align}
	\end{theorem}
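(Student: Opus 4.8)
The plan is to realize $w$ as the decreasing pointwise limit of the partial products $w_k=\prod_{j=1}^k\big|\frac{x-a_j}{x-b_j}\big|=\sqrt{R_k}$, where $R_k(x)=\prod_{j=1}^k\frac{(x-a_j)^2}{(x-b_j)^2}$ is a rational function that is non-negative and bounded by $1$ on $K$ with $d=d_1-d_0=0$, and then to transfer the non-asymptotic bound of Theorem~\ref{LB-Thm} to the limit. Since the zeros $a_j$ of $R_k$ lie in the regular set $K$, they are regular points, so Theorem~\ref{LB-Thm} applies to each $w_k$ for every $n\ge1$ and yields $W_{\infty,n}(K,w_k)\ge 2S(K,w_k)$. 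The whole difficulty is then to justify that both sides converge to their counterparts for $w$.

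First I would record the analytic properties of $w$. Each factor $f_j(x)=|x-a_j|/|x-b_j|$ is continuous on $K$ (as $b_j\notin K$) with $0\le f_j\le1$, so the $w_k$ are continuous and decrease to $w$; thus $0\le w\le1$ and $w$ is upper semicontinuous. At $a=a_{j_0}\in A$ we have $w(a)=0$, and upper semicontinuity together with $w\ge0$ makes $w$ continuous there. At $x_0\in K\setminus A$, closedness of $A$ gives $\dist(x_0,A)>0$; since $|b_j-a_j|\to0$, on a small ball $U$ about $x_0$ every $f_j$ is non-vanishing and $|1-f_j(x)|\le|a_j-b_j|/|x-b_j|$ is summable uniformly on $U$ because $\sum_j|a_j-b_j|<\infty$. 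Lemma~\ref{prod-conv} then shows $w$ is continuous and positive on $U$. Hence $w$ is continuous on $K$, positive exactly on $K\setminus A$, and bounded by $1$; Dini's theorem upgrades $w_k\searrow w$ to uniform convergence on $K$. As $K\setminus A$ is uncountable, $w$ is a bona fide bounded weight and $T_{n,w}$ exists for all $n$.

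Next I would compute the Szeg\H{o} factors. Using \eqref{GrFn} and regularity of $K$ (so that $g_K(a_j)=0$ and $\int\log|x-\zeta|\,d\mu_K(x)=g_K(\zeta)+\log\ca(K)$ is finite for $\zeta\in K$), each $\int_K\log f_j\,d\mu_K=g_K(a_j)-g_K(b_j)=-g_K(b_j)$. Since $\log f_j\le0$, monotone convergence gives $\int_K\log w\,d\mu_K=-\sum_{j=1}^\infty g_K(b_j)$, so the hypothesis $\sum_j g_K(b_j)<\infty$ yields $S(K,w)=\exp[-\sum_j g_K(b_j)]>0$ and $S(K,w_k)=\exp[-\sum_{j=1}^k g_K(b_j)]\to S(K,w)$.

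Finally I would pass to the limit in the extremal problem. From Theorem~\ref{LB-Thm}, $t_n(K,w_k)=\ca(K)^n W_{\infty,n}(K,w_k)\ge 2\,\ca(K)^n S(K,w_k)$. Conversely, testing the $w_k$-problem against the fixed competitor $T_{n,w}$ gives $t_n(K,w_k)\le\|w_kT_{n,w}\|_K\le t_n(K,w)+\|w_k-w\|_K\,\|T_{n,w}\|_K$, and since $\|w_k-w\|_K\to0$ while $\|T_{n,w}\|_K$ is a fixed finite number, $\limsup_k t_n(K,w_k)\le t_n(K,w)$. Combining the two estimates, $t_n(K,w)\ge\limsup_k t_n(K,w_k)\ge\liminf_k t_n(K,w_k)\ge 2\,\ca(K)^n\lim_k S(K,w_k)=2\,\ca(K)^n S(K,w)$, which is exactly \eqref{LB5-Cheb}. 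The \emph{main obstacle} is the analytic step of the second paragraph: proving that the product converges uniformly to a continuous weight although the poles $b_j$ accumulate on $K$ and $w$ genuinely vanishes on $A$. It is precisely the Dini and upper-semicontinuity argument that secures uniform convergence and thereby makes the clean one-sided competitor estimate $\limsup_k t_n(K,w_k)\le t_n(K,w)$ available; note that no uniform control of $\|T_{n,w_k}\|_K$ is needed, only finiteness of the single quantity $\|T_{n,w}\|_K$.
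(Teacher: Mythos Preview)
Your proposal is correct and follows essentially the same route as the paper: approximate $w$ by the decreasing partial products $w_k$, prove continuity of $w$ separately at points of $A$ (via upper semicontinuity and $w\ge0$) and on $K\setminus A$ (via Lemma~\ref{prod-conv}), upgrade to uniform convergence by Dini, compute $S(K,w_k)=\exp[-\sum_{j\le k}g_K(b_j)]\to S(K,w)>0$ by monotone convergence, and pass the rational-weight bound to the limit using the competitor inequality $t_n(K,w_k)\le\|w_kT_{n,w}\|_K$.

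One small slip worth fixing: your $R_k(x)=\prod_{j=1}^k\frac{(x-a_j)^2}{(x-b_j)^2}$ is not real-valued on $K$ when some $b_j\notin\bb R$, so Theorem~\ref{LB-Thm} does not literally apply to it. The correct choice is $R_k(x)=\prod_{j=1}^k\frac{(x-a_j)^2}{(x-b_j)(x-\bar b_j)}$, which is rational, non-negative, and bounded by $1$ on $K$, has $d_0=d_1=2k$ (so $d=0$), and all its zeros $a_j$ are regular points of $K$; then \eqref{LB2-Cheb} indeed gives $W_{\infty,n}(K,w_k)\ge 2S(K,w_k)$ for every $n\ge1$. (The paper sidesteps this by quoting the variant \eqref{LB3-Cheb} for weights of the form $|R|$, which is equivalent.)
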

	\begin{proof}
		Denote the partial products by  $w_k(x)=\prod_{j=1}^k\big|\frac{x-a_j}{x-b_j}\big|$, $k\in\bb N$. Then it follows from $\big\|\frac{x-a_j}{x-b_j}\big\|_K\le 1$ that the sequence $w_k$ is monotone decreasing as $k\to\infty$ and hence $w_k\to w$ pointwise on $K$. In addition, for each  $x_0\in A$ there is $k_0$ such that $x_0=a_{k_0}$. Then for all $k\ge k_0$ we have $0\le w(x)\le w_k(x) \le \big|\frac{x-x_0}{x-b_{k_0}}\big|$ on $K$ which implies that $w$ is continuous at $x_0$.
		
		Next, we note that, it follows from
		$\sum_{j=1}^\infty |b_j-a_j|<\infty$ that $|a_j-b_j|\to0$
		and hence the only limit points of the set $B=\{b_j\}_{j=1}^\infty$ are in $A$. Then locally uniformly on $K\bs A$ (i.e., for $x$ on compact subsets of $K\bs A$) we have $\dist(x,B)\ge c>0$ and it follows from
		\begin{align}
			0 \le 1-\Big|\frac{x-a_j}{x-b_j}\Big| \le \frac{|b_j-a_j|}{|x-b_j|} \le \frac{|b_j-a_j|}{\dist(x,B)}
		\end{align}
		that
		$\sum_{j=1}^\infty \bigl(1-\big|\frac{x-a_j}{x-b_j}\big|\big)$
		converges absolutely and locally uniformly on $K\bs A$. Since
		all the partial products $w_k$ are nonzero and continuous on $K\bs A$, it follows from Lemma~\ref{prod-conv} that the infinite product $w$ is also nonzero and continuous on $K\bs A$.
		
		Combining continuity of $w$ at every point of $A$ with continuity of $w$ on the (relatively) open set $K\bs A$, we conclude that $w$ is continuous on $K$. Since $w_k$ are decreasing to $w$ as $k\to\infty$, it follows from Dini's theorem that $w_k\to w$ uniformly on $K$. This implies that, for each fixed $n\ge1$,
		\begin{align}\label{wk-conv1}
			\lim_{k\to\infty} \|w_k T_{n,w}\|_K \to \|w T_{n,w}\|_K.
		\end{align}
		In addition, $w_k\searrow w$ and $w_k\le 1$ imply that $-\log w_k$ are non-negative and monotone increasing to $-\log w$ hence by the monotone convergence theorem,
		\begin{align}\label{wk-conv2}
			\lim_{k\to\infty} S(K,w_k) = S(K,w).
		\end{align}
		Since by \eqref{Sw-rat}, $S(K,w_k) = \exp\bigl[- \sum_{j=1}^k g_K(b_j)\big]$ for $k\ge1$, it follows from \eqref{wk-conv2} that $S(K,w) = \exp\bigl[-\sum_{j=1}^\infty g_K(b_j)\big]>0$.
		
		Finally, for each fixed $n\ge1$ we have by \eqref{LB3-Cheb},
		\begin{align}\label{wk-conv3}
			2S(K,w_k) \le \frac{\|w_k T_{n,w_k}\|_K}{\ca(K)^n} \le \frac{\|w_k T_{n,w}\|_K}{\ca(K)^n}.
		\end{align}
		Then taking limit as $k\to\infty$ and using \eqref{wk-conv1}
		and \eqref{wk-conv2} yield \eqref{LB5-Cheb}.
	\end{proof}

	\begin{remark}
        \begin{enumerate}[(a)]
            \item The theorem applies to any sequence $\{a_j\}_{j=1}^\infty\subset K$ with finitely many limit points all of which appears among the sequence elements. The latter condition can always be achieved by appending the finite number of the limit points to the sequence. For the sequence $\{b_j\}_{j=1}^\infty$ one can choose, for example, $b_j=a_j+i\eps_j$ with $\eps_j\to0$ sufficiently fast so that $\sum_{j=1}^\infty g_K(b_j)<\infty$. In particular, if $a_j=a$ for all $j$ we get a weight function $w$ with a strong zero at $a$, that is, $w(x)\to0$ as $x\to a$ faster than any power function $|x-a|^\al$.

            \item Under an additional assumption that the partial products $\prod_{j=1}^k\frac{x-a_j}{x-b_j}$ are non-negative on $K$ for all large $k$, a minor modification of the proof that relies on \eqref{LB1-Cheb} instead of \eqref{LB3-Cheb}, shows that \eqref{LB5-Cheb} also holds %for weights of the form
                $w(x)=\prod_{j=1}^\infty\big|\frac{x-a_j}{x-b_j}\big|^{1/2}$.

            \item The assumption that $K$ is regular and $\{a_j\}_{j=1}^\infty\subset K$ can be dropped. In this case, the proof remains the same except \eqref{wk-conv3} which becomes
                \begin{align}
                    2S(K,w_k)\exp\biggl[ -\sum_{j=1}^{k}g_K(a_j) \bigg] \le
                    \frac{\|w_k T_{n,w_k}\|_K}{\ca(K)^n} \le \frac{\|w_k T_{n,w}\|_K}{\ca(K)^n}
		        \end{align}
                and yields
                \begin{align}\label{LB6-Cheb}
			         W_{\infty,n}(K,w)\ge 2S(K,w)\exp\biggl[ -\sum_{j=1}^{\infty}g_K(a_j) \bigg], \quad n\ge1.
		        \end{align}

        \end{enumerate}
	\end{remark}
	
	For future use, we introduce the following definition and record a few simple observations.
	
    \begin{definition}\label{rpw-def}
        Let $K\subset\bb R$ be a regular compact set. We call a weight $w$ on $K$ a \emph{rational product weight} if it is not identically zero and is of one of the following two forms:
        \begin{align}\label{rpw}
            w(x) = \bigg|\frac{P_{d_0}(x)}{Q_{d_1}(x)}\bigg|
            \quad\text{or}\quad
            w(x) = \bigg|\frac{P_{d_0}(x)}{Q_{d_1}(x)}\, \prod_{j=1}^\infty\frac{x-a_j}{x-b_j}\bigg|,
        \end{align}
        where $\{a_j\}_{j=1}^\infty\subset K$, $\{b_j\}_{j=1}^\infty\subset\bb C\bs K$ satisfy the assumptions of Theorem~\ref{prod-w-thm}, $P_{d_0}$ is a polynomial of degree $d_0$ with all zeros in $K$, and $Q_{d_1}$ is a polynomial of degree $d_1$ with all zeros outside of $K$.
    \end{definition}

    By \eqref{Sw-rat} and Theorem~\ref{prod-w-thm}, every rational product weight $w$ is in the Szeg\H{o} class on $K$, that is, $S(K,w)>0$ since $S(K,w_1w_2)=S(K,w_1)S(K,w_2)$ for any bounded weights $w_1,w_2$.

    \begin{lemma}\label{prod-rpw}
         A product of finitely many rational product weights is again a rational product weight.
    \end{lemma}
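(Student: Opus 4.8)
The plan is to reduce, by induction on the number of factors, to the case of a product of two rational product weights $w^{(1)}$ and $w^{(2)}$, and then to treat the polynomial and the infinite-product parts separately. Writing each factor as in Definition~\ref{rpw-def} in the form $w^{(i)}=\big|P^{(i)}/Q^{(i)}\big|\,W^{(i)}$, where $P^{(i)}$ has all its zeros in $K$, $Q^{(i)}$ has all its zeros in $\bb C\bs K$, and $W^{(i)}$ is either the constant $1$ or an infinite product $\prod_j\big|\frac{x-a_j^{(i)}}{x-b_j^{(i)}}\big|$ arising from sequences $\{a_j^{(i)}\},\{b_j^{(i)}\}$ satisfying the hypotheses of Theorem~\ref{prod-w-thm} with closed limit set $A^{(i)}$, the product becomes $w^{(1)}w^{(2)}=\big|\frac{P^{(1)}P^{(2)}}{Q^{(1)}Q^{(2)}}\big|\,W^{(1)}W^{(2)}$. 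I would show that these combined data again match one of the two admissible forms.

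The rational part is immediate: $P^{(1)}P^{(2)}$ is a polynomial with all zeros in $K$ and $Q^{(1)}Q^{(2)}$ is a polynomial with all zeros outside $K$, so $\big|\frac{P^{(1)}P^{(2)}}{Q^{(1)}Q^{(2)}}\big|$ has exactly the required structure. If both $W^{(i)}=1$ the product is already of the first form; if exactly one $W^{(i)}$ is a nontrivial infinite product, that product is kept unchanged and we are in the second form. The only substantial case is when both $W^{(1)}$ and $W^{(2)}$ are nontrivial infinite products, which I would handle by interleaving the two sequences, setting $a_{2j-1}=a_j^{(1)}$, $b_{2j-1}=b_j^{(1)}$, $a_{2j}=a_j^{(2)}$, $b_{2j}=b_j^{(2)}$.

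I then need to check that the interleaved sequences $\{a_j\},\{b_j\}$ satisfy all the hypotheses of Theorem~\ref{prod-w-thm}. Membership $\{a_j\}\subset K$ and $\{b_j\}\subset\bb C\bs K$ is clear, and the per-factor bound $\big\|\frac{x-a_j}{x-b_j}\big\|_K\le1$ is inherited because every factor already occurred in one of the two original products. The summability conditions $\sum_j g_K(b_j)<\infty$ and $\sum_j|b_j-a_j|<\infty$ hold since the interleaved series are simply the termwise sums of the two convergent series, and the closed-limit-set condition holds because the limit set of $\{a_j\}$ is $A^{(1)}\cup A^{(2)}$, a union of two closed sets and hence closed. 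Theorem~\ref{prod-w-thm} then guarantees that the merged product $W=\prod_j\big|\frac{x-a_j}{x-b_j}\big|$ converges uniformly on $K$ to a continuous weight.

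Finally I would identify $W$ with $W^{(1)}W^{(2)}$, so that $w^{(1)}w^{(2)}$ genuinely has the second form. This is the one step involving convergence rather than bookkeeping, and is where I expect the only (mild) difficulty. By the interleaving, the partial product of $W$ after an even number $2k$ of factors equals $W_k^{(1)}W_k^{(2)}$, the product of the two partial products; since $W_k^{(1)}\searrow W^{(1)}$ and $W_k^{(2)}\searrow W^{(2)}$ uniformly on $K$ by Dini's theorem (as in the proof of Theorem~\ref{prod-w-thm}), the even partial products converge uniformly to $W^{(1)}W^{(2)}$, and the odd ones, being $W_k^{(1)}W_{k-1}^{(2)}$, converge to the same limit. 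Hence $W=W^{(1)}W^{(2)}$ on $K$. The resulting weight is positive off the countable set consisting of $A^{(1)}\cup A^{(2)}$ together with the finitely many zeros of $P^{(1)}P^{(2)}$, hence positive at infinitely many points and not identically zero, so it is indeed a rational product weight, completing the induction.
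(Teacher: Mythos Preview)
Your proposal is correct and follows essentially the same approach as the paper: the paper also interleaves the sequences, handling all $m$ factors at once rather than reducing to two by induction, and simply asserts without further detail that the merged sequences satisfy the hypotheses of Theorem~\ref{prod-w-thm}. One small slip worth correcting: the closedness hypothesis in Theorem~\ref{prod-w-thm} concerns the set $A=\{a_j\}_{j=1}^\infty$ of sequence values itself, not its set of limit points, though your verification goes through unchanged since the merged set of values is $A^{(1)}\cup A^{(2)}$, a union of two closed sets.
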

    \begin{proof}
        A product of finitely many polynomials is again a polynomial. Moreover, $m$ pairs of sequences $\{a_{k,j},b_{k,j}\}_{j=1}^\infty$, $k=1,\dots,m$, satisfying the assumptions of Theorem~\ref{prod-w-thm} can be rearranged into a single pair of sequences $\{a_j,b_j\}_{j=1}^\infty$, defined by $a_{(j-1)m+k}=a_{k,j}$, $b_{(j-1)m+k}=b_{k,j}$, $k=1,\dots,m$, $j\ge1$, which again satisfies the assumptions of Theorem~\ref{prod-w-thm} and hence $\prod_{k=1}^m \prod_{j=1}^\infty\big|\frac{x-a_{k,j}}{x-b_{k,j}}\big| = \prod_{j=1}^\infty\big|\frac{x-a_j}{x-b_j}\big|$.
    \end{proof}
	
    %In addition, a minor modification of the proof of Theorem~\ref{prod-w-thm} yields the optimal lower bound for rational product weights:

	\begin{theorem}\label{prod-w-thm2}
		Let $K\subset\bb R$ be a regular compact set and suppose $w$ is a rational product weight on $K$ of the from \eqref{rpw}. Then for all $n>d_1-d_0$,
		\begin{align}\label{LB7-Cheb}
			W_{\infty,n}(K,w)\ge 2S(K,w).
		\end{align}
	\end{theorem}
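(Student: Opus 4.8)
The plan is to treat the two forms in \eqref{rpw} separately, reducing the second (infinite-product) form to the first (finite rational) form by a limiting argument modeled on the proof of Theorem~\ref{prod-w-thm}.

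First, for a weight of the first form $w = |P_{d_0}/Q_{d_1}|$, I would apply the rational-weight bound \eqref{LB3-Cheb} from the preceding Remark directly. The zeros of the rational function $R = P_{d_0}/Q_{d_1}$ are precisely the zeros of $P_{d_0}$, all of which lie in $K$; since $K$ is regular, every such zero $a_j$ is a regular point and hence $g_K(a_j) = 0$. The exponential factor in \eqref{LB3-Cheb} therefore equals $1$, and \eqref{LB3-Cheb} reduces to $W_{\infty,n}(K,w) \ge 2S(K,w)$ for all $n > d_1 - d_0$, which is exactly \eqref{LB7-Cheb} in this case.

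For the second form I would write $w = v\,u$ with $v = |P_{d_0}/Q_{d_1}|$ and $u(x) = \prod_{j=1}^\infty |(x-a_j)/(x-b_j)|$, set $u_k$ equal to the $k$-th partial product, and put $w_k = v\,u_k$. The key observation is that each $w_k$ is again a rational product weight of the first form, with numerator $P_{d_0}(x)\prod_{j=1}^k(x-a_j)$ of degree $d_0 + k$ (all zeros in $K$) and denominator $Q_{d_1}(x)\prod_{j=1}^k(x-b_j)$ of degree $d_1 + k$ (all zeros off $K$). The degree difference is $(d_1+k)-(d_0+k) = d_1 - d_0$, independent of $k$, so the first part applies \emph{uniformly} in $k$ and gives $W_{\infty,n}(K,w_k) \ge 2S(K,w_k)$ for every $n > d_1 - d_0$. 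It then remains to pass to the limit $k \to \infty$ as in Theorem~\ref{prod-w-thm}. Since $v$ is bounded and continuous on $K$ (as $Q_{d_1}$ has no zeros on $K$) and $u_k \to u$ uniformly on $K$ by Theorem~\ref{prod-w-thm}, we have $w_k \to w$ uniformly; and by multiplicativity of the Szeg\H{o} factor together with \eqref{wk-conv2}, $S(K,w_k) = S(K,v)S(K,u_k) \to S(K,v)S(K,u) = S(K,w)$. Using the minimality of $T_{n,w_k}$ I would then insert the fixed competitor $T_{n,w}$ to obtain, for each fixed $n > d_1 - d_0$,
\[
	2S(K,w_k) \le \frac{\|w_k T_{n,w_k}\|_K}{\ca(K)^n} \le \frac{\|w_k T_{n,w}\|_K}{\ca(K)^n},
\]
and letting $k \to \infty$ (using $\|w_k T_{n,w}\|_K \to \|w T_{n,w}\|_K$) yields \eqref{LB7-Cheb}.

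The argument is essentially routine once the degree bookkeeping is in place; the only point requiring care is the uniform-in-$k$ threshold $n > d_1 - d_0$, which is precisely what makes the partial-product approximation legitimate for a single fixed $n$ rather than forcing $n$ to grow with $k$. Everything else (uniform convergence of $w_k$, convergence of the Szeg\H{o} factors, and convergence of the norms of the fixed polynomial $T_{n,w}$) mirrors the corresponding steps in the proof of Theorem~\ref{prod-w-thm}.
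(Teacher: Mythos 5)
Your proposal is correct and follows essentially the same route as the paper's own proof: apply \eqref{LB3-Cheb} to the finite rational truncations $w_k = |P_{d_0}/Q_{d_1}|\prod_{j=1}^k|(x-a_j)/(x-b_j)|$ (whose degree difference stays $d_1-d_0$, so the threshold on $n$ is uniform in $k$), then pass to the limit via uniform convergence, convergence of the Szeg\H{o} factors, and the Chebyshev comparison inequality with the fixed competitor $T_{n,w}$. The only cosmetic difference is that you invoke multiplicativity of $S(K,\cdot)$ where the paper reuses the monotone convergence argument of \eqref{wk-conv2}; both are valid.
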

	\begin{proof}
        In the case of a rational weight $w$, \eqref{LB7-Cheb} follows from \eqref{LB3-Cheb}. In the other case of $w$, the proof requires only a slight modification of the proof of Theorem~\ref{prod-w-thm}.

        Let $w_0=|P_{d_0}/Q_{d_1}|$ and recall that $w_k(x) = \prod_{j=1}^k\big|\frac{x-a_j}{x-b_j}\big|$ converge monotonically and uniformly on $K$. Then $w_0w_k\to w$ monotonically and uniformly on $K$ and so as in \eqref{wk-conv1} and \eqref{wk-conv2}, we get $\|w_0w_kT_{n,w}\|_K\to\|wT_{n,w}\|_K$ and $S(K,w_0w_k)\to S(K,w)$. By \eqref{LB3-Cheb}, we have for all $n>d_1-d_0$,
		\begin{align}
			2S(K,w_0w_k) \le \frac{\|w_0w_kT_{n,w_0w_k}\|_K}{\ca(K)^n} \le \frac{\|w_0w_kT_{n,w}\|_K}{\ca(K)^n}.
		\end{align}
		Then taking the limit as $k\to\infty$ finishes the proof.
	\end{proof}
	
    \begin{remark}
        The assumption that $P_{d_0}$ has zeros on $K$ can be dropped. In this case, instead of \eqref{LB7-Cheb} we get \eqref{LB3-Cheb}, where $\{a_j\}_{j=1}^{d_0}$ are the zeros of $P_{d_0}$ repeated according to their multiplicities.
    \end{remark}

	%%%%%%%%%%%%%%%%%%%%%%%%%%%%%%%%%%%%%%%%%%%%%%%%%%%%%%%%%%%%%%%%%%%%%%%
	\section{Asymptotic Lower Bounds for $L^\infty$ norm}
	%%%%%%%%%%%%%%%%%%%%%%%%%%%%%%%%%%%%%%%%%%%%%%%%%%%%%%%%%%%%%%%%%%%%%%%
	
	In this section, we extend Widom's asymptotic lower bound \eqref{Wid-ALB-Cheb} to Szeg\H{o} class weights with finitely many zeros on $K$. We start with some preparatory results.
	
	\begin{lemma}
		Let $K\subset\bb R$ be a compact set containing an interval $I=[\al,\be]$. Then for all $z=x+iy$ with $x\in(\al,\be)$ the Green function $g_K(z)$ satisfies
		\begin{align}\label{GInt-UB}
			|g_K(z)| \le \frac{|y|}{\sqrt{(\be-x)(x-\al)}}.
		\end{align}
	\end{lemma}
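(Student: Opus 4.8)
The plan is to reduce to the interval itself by monotonicity of the Green function, then normalize affinely to $[-1,1]$, and finally establish an explicit one-variable inequality for the classical Green function of $[-1,1]$.

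\emph{Reduction to the interval.} Since $I=[\al,\be]\subset K$, I would compare $g_K$ with the Green function $g_I$ of the interval. The difference $g_I-g_K$ is harmonic on $\Om_K\subset\Om_I=\bb C\bs I$ (the logarithmic poles at $\infty$ cancel, so it stays bounded near $\infty$), it is nonnegative q.e.\ on $\pd\Om_K$ (there $g_K=0$ while $g_I\ge0$), and hence nonnegative on $\Om_K$ by the maximum principle; off $\Om_K$ one has $g_K=0\le g_I$. Thus $0\le g_K\le g_I$ on $\bb C$, so it suffices to prove the stated bound with $g_K$ replaced by $g_I$. For $z=x+iy$ with $y\ne0$ the point $z$ lies in $\Om_I$, so $g_I(z)$ is the genuine Green function value, while for $y=0$ both sides vanish because every point of $(\al,\be)$ is regular for $I$, giving $g_I=0$ there.

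\emph{Affine normalization.} Let $\phi(z)=\frac{2z-(\al+\be)}{\be-\al}$, the affine bijection sending $I$ onto $[-1,1]$. Green functions are invariant under affine maps of the plane, so $g_I(z)=g_{[-1,1]}(\phi(z))$. Writing $\phi(x+iy)=\ti x+i\ti y$ with $\ti y=\frac{2y}{\be-\al}$, a direct computation gives $1-\ti x^2=\frac{4(\be-x)(x-\al)}{(\be-\al)^2}$, whence $\frac{|\ti y|}{\sqrt{1-\ti x^2}}=\frac{|y|}{\sqrt{(\be-x)(x-\al)}}$. Therefore the claim becomes the normalized inequality $g_{[-1,1]}(u+iv)\le\frac{|v|}{\sqrt{1-u^2}}$ for $u\in(-1,1)$.

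\emph{The explicit inequality, which is the crux.} I would use the classical parametrization $u+iv=\cos\ze$ with $\ze=s+it$, under which $g_{[-1,1]}(\cos\ze)=|t|$ (verified from $g_{[-1,1]}=0$ on $[-1,1]$ and $g_{[-1,1]}(z)=\log|z|-\log\ca([-1,1])+o(1)$ at $\infty$). By the symmetry $g_{[-1,1]}(\bar z)=g_{[-1,1]}(z)$ one may take $t\ge0$ and $s\in(0,\pi)$, so that $u=\cos s\cosh t$ and $v=-\sin s\sinh t$. Since $1-u^2>0$, squaring reduces the goal to $t^2(1-\cos^2 s\,\cosh^2 t)\le\sin^2 s\,\sinh^2 t$, i.e., writing $p=\cos^2 s\in[0,1)$, to $t^2-\sinh^2 t\le p\,(t^2\cosh^2 t-\sinh^2 t)$. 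This is the heart of the argument, and it holds transparently: the left-hand side is $\le0$ because $\sinh t\ge t$, while the right-hand side is $\ge0$ because $p\ge0$ and $t\cosh t\ge\sinh t$ for $t\ge0$ (the function $t\cosh t-\sinh t$ vanishes at $0$ and has derivative $t\sinh t\ge0$). The only substantive step is this scalar inequality; the reduction and normalization are routine once monotonicity and affine invariance of the Green function are invoked.
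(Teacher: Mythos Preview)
Your proof is correct, and the reduction and normalization steps match the paper's. The difference is in how you handle the core inequality $g_{[-1,1]}(u+iv)\le |v|/\sqrt{1-u^2}$. The paper uses the explicit formula $g_{[-1,1]}(z)=\Re\log(z+\sqrt{z^2-1})$, applies the Cauchy--Riemann equations to get $\partial_y g_{[-1,1]}(x+iy)=-\Im(z^2-1)^{-1/2}$, bounds this derivative in modulus by $|z^2-1|^{-1/2}\le(1-x^2)^{-1/2}$ uniformly in $y$, and integrates from $y=0$. You instead parametrize by $z=\cos(s+it)$, turning the statement into the scalar inequality $t^2-\sinh^2 t\le\cos^2 s\,(t^2\cosh^2 t-\sinh^2 t)$, which you dispatch by observing the left side is $\le0$ and the right side is $\ge0$. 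The paper's route is a clean calculus one-liner once the derivative is identified; yours trades the differentiation for an explicit conformal uniformization and a purely algebraic endgame, which makes the inequality self-evident without tracking the branch of the square root. Both are short and self-contained.
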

	\begin{proof}
		By the maximum principle for harmonic functions, $g_K(z)\le g_I(z)$ for all $z$ and hence it suffices to prove the inequality for $g_I$. By shifting and scaling, we may assume without loss of generality that $I=[-1,1]$ in which case $g_I(z) = \log|z+\sqrt{z^2-1}| = \Re\log\bigl(z+\sqrt{z^2-1}\,\big)$. Then using the Cauchy--Riemann equations, we obtain
		\begin{align}
			\frac{\pd}{\pd y}g_I(z) = -\Im\frac{d}{dz}\log\bigl(z+\sqrt{z^2-1}\,\big) = -\Im\frac{1}{\sqrt{z^2-1}}.
		\end{align}
		It follows that
		$|\frac{\pd}{\pd y}g_I(z)| \le \frac1{|z+1|^{1/2}|z-1|^{1/2}} \le \frac1{\sqrt{1-x^2}}$.
		Then since $g_I(x)=0$ we have
		\begin{align}
			|g_I(z)| = \bigg|\int_0^y \frac{\pd}{\pd t}g_I(x+it)dt\bigg| \le \frac{|y|}{\sqrt{1-x^2}}.
		\end{align}
	\end{proof}
	
	\begin{theorem}\label{single-zero-w}
		Let $K\subset\bb R$ be a compact set given by a disjoint union of a closed interval $[\al,\be]$ and some other closed set. Let $w$ be a bounded weight of the Szeg\H{o} class on $K$ $($i.e., $S(K,w)>0$$)$, continuous at almost every point of $[\al,\be]$, and bounded away from zero outside arbitrarily small neighborhoods of $x_0\in K$ $($i.e., for each $\eps>0$ there is $L>0$ such that $w(x)\ge L$ for all $x\in K$ with $|x-x_0|>\eps$$)$. Then there exists a weight function $w_0$ of the form \eqref{prod-w} with the zeros and poles satisfying the assumptions of Theorem~\ref{prod-w-thm} and such that for some $C>0$,
		\begin{equation}\label{w0-w-ineq}
			C w_0(x) \leq w(x), \quad x\in K.
		\end{equation}
	\end{theorem}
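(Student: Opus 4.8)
The plan is to build $w_0$ as an infinite product of elementary factors $\bigl|\frac{x-a}{x-(a+i\eta)}\bigr|$, arranged so that the potential $-\log w_0$ majorizes $-\log w$ up to an additive constant; since $C w_0\le w$ is equivalent to $-\log w_0\ge -\log w+\log C$, this is precisely the desired inequality. First I would normalize and localize. Replacing $w$ by $w/\|w\|_K$ (and adjusting $C$) we may assume $w\le1$, so $f:=-\log w\ge0$, and $S(K,w)>0$ gives $\int_K f\,d\mu_K<\infty$. Since $[\al,\be]\subset K$, the equilibrium measure $\mu_K$ is absolutely continuous on $(\al,\be)$ with density bounded below by some $c_0>0$ on each compact subinterval (a standard fact: $g_K$ vanishes on the interval and has a strictly positive normal derivative there). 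Fixing $\de_0$ with $[x_0-\de_0,x_0+\de_0]\subset(\al,\be)$, we get $\int_{|x-x_0|\le\de_0}f\,dx\le c_0^{-1}\int_K f\,d\mu_K<\infty$. Outside this neighborhood $w\ge L_0>0$, and since every admissible factor has modulus $\le1$ on $K$ we will have $w_0\le1$, so $C w_0\le w$ holds there for $C\le L_0$; it remains to control $f$ on $J:=[x_0-\de_0,x_0+\de_0]$.

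For the building block, given $a\in(\al,\be)$ and $\eta>0$ set $b=a+i\eta$; then for real $x$ one has $\log\bigl|\frac{x-b}{x-a}\bigr|=\tfrac12\log\bigl(1+\eta^2/(x-a)^2\bigr)\ge0$, so $\bigl\|\frac{x-a}{x-b}\bigr\|_K\le1$, the value is $\ge\frac12\log2$ on $[a-\eta,a+\eta]$, $|b-a|=\eta$, and by \eqref{GInt-UB} $g_K(b)\le c_1^{-1}\eta$ with $c_1:=\min_{a\in J}\sqrt{(\be-a)(a-\al)}>0$. With the super-level sets $V_k:=\{x\in J: f(x)\ge k\}$ one has the layer-cake bound $f\le\sum_{k\ge1}\mathbf{1}_{V_k}+1$, so it suffices to arrange $-\log w_0=\sum_j\log\bigl|\frac{x-b_j}{x-a_j}\bigr|\ge\sum_{k\ge1}\mathbf{1}_{V_k}$ on $J$, which yields $-\log w_0\ge f-1$, i.e.\ $e^{-1}w_0\le w$ on $J$. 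Because $w$ is bounded away from zero off every neighborhood of $x_0$, the $V_k$ shrink to $\{x_0\}$: $V_k\subset\{|x-x_0|\le\eps_k\}$ with $\eps_k\downarrow0$. For each $k$ I would cover $V_k$ by an open set $O_k=\bigsqcup_i I_{k,i}\subset J$ with $\sum_i|I_{k,i}|\le|V_k|+2^{-k}$, and over each component $I_{k,i}$ of half-length $\eta_{k,i}$ and center $a_{k,i}$ place three such factors, whose sum exceeds $\tfrac32\log2>1$ on $I_{k,i}$; summing raises each point of $V_k$ by at least $1$.

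The total mass is then $\sum_j\eta_j=3\sum_k\sum_i\eta_{k,i}=\tfrac32\sum_k|O_k|\le\tfrac32\sum_k(|V_k|+2^{-k})$, and by the layer-cake identity $\sum_k|V_k|=\int_J\lfloor f\rfloor\,dx\le\int_J f\,dx<\infty$. Hence $\sum_j|b_j-a_j|<\infty$ and $\sum_j g_K(b_j)\le c_1^{-1}\sum_j\eta_j<\infty$, while $a_j\in J\subset(\al,\be)\subset K$ and $b_j=a_j+i\eta_j\in\bb C\bs K$ with $\bigl\|\frac{x-a_j}{x-b_j}\bigr\|_K\le1$. Thus, modulo the closedness point below, the hypotheses of Theorem~\ref{prod-w-thm} hold, the product defines a weight $w_0$ of the form \eqref{prod-w}, and combining the bounds on $J$ and off $J$ gives $C w_0\le w$ with $C=\min(e^{-1},L_0)$.

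The main obstacle is the remaining hypothesis of Theorem~\ref{prod-w-thm}, namely that the zero set $A=\{a_j\}$ be a \emph{closed} (countable) subset of $K$. The high-level factors are harmless since their centers accumulate only at $x_0$; the difficulty is that a single low level $V_k$ could be an irregular (nowhere dense, positive-measure) set whose efficient covers have centers accumulating on an uncountable set, destroying closedness. This is exactly where the hypothesis that $w$ be continuous at almost every point of $[\al,\be]$ is used: it lets me pass to the closed super-level sets of the lower-semicontinuous regularization of $w$ and choose the thresholds $e^{-k}$ to be regular values (equivalently, choose the covering intervals with endpoints at continuity points), so that each $V_k$ is, up to a set of measure $<2^{-k}$ absorbed into the next level, a finite union of intervals. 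With such covers the centers form a countable set whose only accumulation point is $x_0$, making $A\cup\{x_0\}$ closed. Checking that this regularization simultaneously preserves the pointwise bound $-\log w_0\ge f-O(1)$ on all of $J$ and the summability of the masses is the delicate bookkeeping; the rest is the soft potential-theoretic estimate above.
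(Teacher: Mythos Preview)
Your layer-cake approach is different from the paper's; the building-block estimates and the summability bookkeeping are correct for the interior case, but two genuine gaps remain.

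\textbf{The endpoint case is missing.} Your localization $J=[x_0-\delta_0,x_0+\delta_0]\subset(\alpha,\beta)$ and the uniform Green-function bound $g_K(a+i\eta)\le c_1^{-1}\eta$ with $c_1=\min_{a\in J}\sqrt{(\beta-a)(a-\alpha)}>0$ both presuppose $x_0\in(\alpha,\beta)$. When $x_0=\beta$ the centers $a_j\to\beta$, $c_1=0$, and one must instead control $\sum_j\eta_j/\sqrt{\beta-a_j}$. The paper handles this (its Case~I) by invoking the sharper equilibrium-density lower bound $d\mu_K(x)\ge D\,dx/\sqrt{(x-\alpha)(\beta-x)}$ on $(\alpha,\beta)$, so that the Szeg\H{o} condition itself delivers the required weighted summability; your layer-cake masses would have to be tracked against this weight, not Lebesgue measure, near the endpoint.

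\textbf{Closedness of $A$ is not established.} You correctly diagnose that covering a general $V_k$ by intervals may scatter centers with uncountable accumulation, and you propose to ``absorb'' the irregular part of each $V_k$ into the next level. But the absorbed set must still be covered somewhere to preserve the pointwise bound $-\log w_0\ge f-O(1)$, and nothing in your sketch forces those later covers to be finite either; the ``delicate bookkeeping'' you defer is the crux, not a detail. The paper sidesteps this entirely. It partitions a one-sided neighborhood of $x_0$ into annuli $[y_k,y_{k+1}]$ on each of which $|\log w|$ is \emph{bounded} (by the bounded-away-from-zero hypothesis) and a.e.\ continuous, hence Riemann--Stieltjes integrable with respect to $d\mu_K$. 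An upper Darboux sum on a \emph{finite} partition $\{x_j\}$ of $[y_k,y_{k+1}]$, with $\ell_j=\sup_{[x_j,x_{j+1}]}|\log w|$, then approximates the integral to within $2^{-k}$; setting $a_j=x_j$, $b_j=x_j+i(x_{j+1}-x_j)$, and multiplicity $r_j=\lceil\ell_j/\log 2\rceil$ gives $w_0\le 2^{-r_j}\le e^{-\ell_j}\le w$ on $[x_j,x_{j+1}]$ directly. The centers form a monotone sequence with sole limit point $x_0$, so $A$ is automatically closed, and the weighted summability needed at the endpoint falls out of the Darboux approximation against $d\mu_K$ rather than $dx$. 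This is the right way to exploit the a.e.\ continuity hypothesis.
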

	\begin{proof}
		Without loss of generality, we may assume that $w\le\frac12$ on $K$ and $x_0$ is not the left end point of $[\al,\be]$. Pick a strictly increasing sequence $\{y_k\}_{k=1}^\infty\subset (\al,\be)$ such that $y_k\to x_0$. Since $w$ is of the Szeg\H{o} class and $\log w\le0$ we have
		\begin{align}\label{ImprRInt}
			\sum_{k=1}^\infty \int_{y_{k}}^{y_{k+1}}|\log w(x)|\, d\mu_K(x) \le
			\int_K |\log w(x)|\, d\mu_K(x) < \infty.
		\end{align}
		
		It is known that the equilibrium measure $\mu_K$ can be regarded as the balayage of the Dirac point mass at infinity onto $K$. This connection together with the properties of balayage measures \cite[Section~2.1]{Tot06} imply that on the interval $[\al,\be]$, the equilibrium measure $d\mu_K$ is absolutely continuous $d\mu_K(x) = w_K(x)dx$ with the density function $w_K(x)$ continuous on $(\al,\be)$ and satisfying,
		\begin{align}\label{wK-sqroot}
			w_K(x)\ge \frac{D}{\sqrt{(x-\al)(\be-x)}}, \quad x\in(\al,\be),
		\end{align}
		for some $D>0$.
		
		By assumption, $w(x)$ is continuous at a.e.\ $x\in[\al,\be]$ and $\log w$ is bounded on each $[y_{k},y_{k+1}]$. It follows that $\log w$ is Riemann--Stieltjes integrable with respect to $d\mu_K$ on each $[y_{k},y_{k+1}]$ and hence $\int_{y_{k}}^{y_{k+1}}|\log w(x)|\, d\mu_K(x)$ can be approximated by the upper Darboux sum. For each $k\ge1$, let $\{x_j\}_{j=n_k}^{n_{k+1}}$ be a partition of $[y_k,y_{k+1}]$ such that
		\begin{align}\label{RSum}
			\bigg|\int_{y_{k}}^{y_{k+1}}|\log w(x)|\,d\mu_K(x) - \sum_{j=n_k}^{n_{k+1}-1} \ell_j \mu_K\big([x_j,x_{j+1}]\big)\bigg| < \frac1{2^k},
		\end{align}
		where $\ell_j=\sup_{x\in[x_j,x_{j+1}]}|\log w(x)|$, $j\ge1$.
		Then by \eqref{ImprRInt} and \eqref{RSum},
		\begin{align}\label{DiscrSz}
			\sum_{j=1}^\infty \ell_j \mu_K\big([x_j,x_{j+1}]\big) < \infty.
		\end{align}
		
		The remainder of the proof is split into two cases.
		
		\textbf{Case I}: Assume that $x_0$ is a boundary point of $[\al,\be]$. Without loss of generality, we may assume $x_0=\be$. Now set
		\begin{align}
			&a_0=x_0,   && b_0=a_0+i, &&r_0=1 \\
			&a_j=x_j,   && b_j=a_j+i(x_{j+1}-x_j), && r_j=\lceil\ell_j/\log 2\rceil, \quad j\ge1,
		\end{align}
		and define the weight function $w_0$ by the infinite product $w_0(x)=\prod_{j=0}^\infty\big|\frac{x-a_j}{x-b_j}\big|^{r_j}$.
		Since by construction $\big|\frac{x-a_j}{x-b_j}\big| \le 1$ on $\bb R$ and $\big|\frac{x-a_j}{x-b_j}\big| \le \frac12$ on $[x_j,x_{j+1}]$
		we have $w_0\le 1$ on $\bb R$ and $w_0 \le 2^{-r_j}$ on $[x_j,x_{j+1}]$, hence
		\begin{align}
			w_0(x) \le e^{-r_j\log2} \le e^{-\ell_j} \le w(x), \quad x\in[x_j,x_{j+1}],\; j\ge1.
		\end{align}
		In addition, $w_0(x_0)=0$ by construction, so we have
		\begin{align}\label{w0-w-loc}
			w_0(x) \le w(x), \quad x\in[y_1,x_0].
		\end{align}
		Since $[\al,\be]$ is a positive distance away from $K\bs[\al,\be]$, \eqref{w0-w-loc} implies that $w_0\le w$ in an (open) neighborhood of $x_0$. Then by assumption there is $L>0$ such that $w\ge L$ outside of that neighborhood of $x_0$ hence \eqref{w0-w-ineq} holds with $C=\min\{1,L\}$.
		
		Since $x_j\to x_0=\be$, it follows from \eqref{wK-sqroot} and \eqref{DiscrSz} that
		\begin{align}\label{DiscrSzEndPt}
			\sum_{j=1}^\infty \ell_j \frac{x_{j+1}-x_j}{\sqrt{\be-x_j}} < \infty.
		\end{align}
		Recalling that $w\le\frac12$ we have $\ell_j\ge|\log w|\ge\log2$ and hence $r_j=\lceil\ell_j/\log2\rceil \le C_1\ell_j$ for all $j$ with some universal constant $C_1>0$. Then using \eqref{GInt-UB}, for some constants $C_2, C_3 >0$, we get
		\begin{align}
			\sum_{j=1}^\infty r_j g_K(b_j) \le C_2\sum_{j=1}^\infty r_j\frac{|b_j-a_j|}{\sqrt{\be-a_j}} \le C_3\sum_{j=1}^\infty \ell_j\frac{x_{j+1}-x_j}{\sqrt{\be-x_j}} < \infty.
		\end{align}
		In addition, since $x_j\in(\al,\be)$ for all $j\ge1$ we also have
		\begin{align}
			\sum_{j=1}^\infty r_j|b_j-a_j| = \sum_{j=1}^\infty r_j(x_{j+1}-x_j) &\le C_1\sum_{j=1}^\infty \ell_j(x_{j+1}-x_j)
			\no \\
			&\le C_1\sqrt{\be-\al}\sum_{j=1}^\infty \ell_j \frac{x_{j+1}-x_j}{\sqrt{\be-x_j}} < \infty.
		\end{align}
		Thus, the assumptions of Theorem~\ref{prod-w-thm} are satisfied for the infinite product $w_0$.

		\textbf{Case II}: Assume that $x_0$ is an interior point of $[\al,\be]$. In this case, the construction in the left vicinity of $x_0$ from the beginning of the proof has to be repeated for the right vicinity of $x_0$. Thus, we pick a strictly decreasing sequence $\{\ti y_k\}_{k=1}^\infty\subset(\al,\be)$ and approximate Riemann--Stieltjes integrals $\int_{\ti y_{k+1}}^{\ti y_k}|\log w(x)|\, d\mu_K(x)$ by the upper Darboux sums. As before this gives us a strictly decreasing sequence $\{\ti x_j\}_{j=1}^\infty$ such that $\ti x_1=\ti y_1$, $\ti x_j\to x_0$
		and $\ti\ell_j:=\sup_{x\in[\ti x_{j+1},\ti x_j]}|\log w(x)|$, $j\ge1$, satisfy
		\begin{align}\label{DiscrSz2}
			\sum_{j=1}^\infty \ti\ell_j \mu_K\big([\ti x_{j+1},\ti x_j]\big) < \infty.
		\end{align}
		Then we set
		\begin{align}
			&a_0=x_0, &&b_0=a_0+i, &&r_0=1, \\
			&a_{2j}=x_j, &&b_{2j}=a_{2j}+i(x_{j+1}-x_j), &&r_{2j}=\lceil\ell_j/\log 2\rceil, \\
			&a_{2j-1}=\ti x_j, &&b_{2j-1}=a_{2j-1}+i(\ti x_j-\ti x_{j+1}), &&r_{2j-1}=\lceil\ti\ell_j/\log 2\rceil, \quad j\ge1,
		\end{align}
		and define the weight function $w_0$ by the infinite product $w_0(x)=\prod_{j=0}^\infty\big|\frac{x-a_j}{x-b_j}\big|^{r_j}$.
		As before the construction yields $w_0\le w$ on $[y_1,x_0]$ and now also on $[x_0,\ti y_1]$. Thus, the inequality holds on a neighborhood of $x_0$ and hence as before we conclude that  \eqref{w0-w-ineq} holds with some constant $C>0$.
		
		Since $\al<x_1<x_j,\ti x_j<\ti x_1<\be$ for all $j$, it follows from \eqref{wK-sqroot} and \eqref{DiscrSz}, \eqref{DiscrSz2} that for some constant $C_1>0$,
		\begin{align}
			\sum_{j=1}^\infty r_j|b_j-a_j|
			&= \sum_{j=1}^\infty r_{2j}(x_{j+1}-x_j) + \sum_{j=1}^\infty r_{2j-1}(\ti x_j-\ti x_{j+1})
			\no \\
			&\le C_1\sum_{j=1}^\infty \ell_j\mu_K\big([x_j,x_{j+1}]\big) +
			C_1\sum_{j=1}^\infty \ti\ell_j\mu_K\big([\ti x_{j+1}-\ti x_j]\big) < \infty.
		\end{align}
		Then using \eqref{GInt-UB} and the above inequality we also get that for some $C_2>0$,
		\begin{align}
			\sum_{j=1}^\infty r_j g_K(b_j) \le C_2\sum_{j=1}^\infty r_j|b_j-a_j| < \infty.
		\end{align}
		Thus, the assumptions of Theorem~\ref{prod-w-thm} are satisfied for the infinite product $w_0$.
	\end{proof}

    A compact set $K\subset\bb R$ is called finite gap if it is a disjoint union of intervals. It is known that finite gaps sets are regular for potential theory.
	
	\begin{corollary}\label{multi-zero-w}
		Let $K\subset\bb R$ be a finite gap compact set and suppose $w$ is a bounded, Szeg\H{o} class weight on $K$ such that $w$ is continuous at a.e.\ point of $K$ and bounded away from zero outside of arbitrarily small neighborhoods of some finite set $X=\{x_1,\dots,x_m\}\subset K$.
		Then, there exists a rational product weight $w_0$ such that $w_0\le w$ on $K$.
%Szeg\H{o} class weight function $w_0$ of the form \eqref{prod-w} with the zeros and poles satisfying the assumptions of Theorem~\ref{prod-w-thm} and such that for some $C>0$ and all $x\in K$,
%		\begin{align}
%			C w_0(x) \le w(x).
%		\end{align}
	\end{corollary}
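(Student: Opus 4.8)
The plan is to reduce the statement to the single-zero result of Theorem~\ref{single-zero-w} by handling the zeros $x_1,\dots,x_m$ one at a time and then folding the resulting single-zero weights into one rational product weight by means of Lemma~\ref{prod-rpw}. Since $K$ is finite gap it is regular (so Definition~\ref{rpw-def} and Theorems~\ref{prod-w-thm},~\ref{single-zero-w} are available), and each $x_i$ lies in one of the finitely many closed intervals whose union is $K$. Writing $K$ as the disjoint union of that interval and the remaining closed part puts us exactly in the setting of Theorem~\ref{single-zero-w} with distinguished point $x_i$, provided we first replace $w$ near the other zeros by something bounded below.

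First I would fix $\delta>0$ so small that the sets $U_i=\{x\in K:|x-x_i|<\delta\}$ are pairwise disjoint, and, using the hypothesis, pick $L>0$ with $w\ge L$ on $K\setminus\bigcup_i U_i$. For each $i$ I define a modified weight
\begin{align}
  w_i(x)=\begin{cases} w(x), & x\in U_i,\\ L, & x\in K\setminus U_i.\end{cases}
\end{align}
Then $w_i$ is bounded, it is continuous at a.e.\ point of $K$ (it differs from the a.e.\ continuous $w$ only on $K\setminus U_i$, where it is constant, apart from the two endpoints of $U_i$), and it lies in the Szeg\H{o} class since $\int_K\log w_i\,d\mu_K=\int_{U_i}\log w\,d\mu_K+\log L\,\mu_K(K\setminus U_i)>-\infty$ (each integral is bounded above because $w$ is bounded, and the first is $>-\infty$ because $\int_K\log w\,d\mu_K>-\infty$). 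Moreover $w_i$ is bounded away from zero outside every neighborhood of $x_i$: on $K\setminus U_i$ it equals $L$, while for $x\in U_i$ with $|x-x_i|>\eps$ disjointness gives $\dist(x,X)=|x-x_i|>\eps$, so $w(x)$ is bounded below by the Corollary's hypothesis. Thus Theorem~\ref{single-zero-w} applies and produces a weight $w_0^{(i)}$ of the form~\eqref{prod-w}, with zeros and poles satisfying the assumptions of Theorem~\ref{prod-w-thm}, together with a constant $C_i>0$ such that $C_i w_0^{(i)}\le w_i$ on $K$; by Theorem~\ref{prod-w-thm} each $w_0^{(i)}\le 1$ on $K$.

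Finally I would set $w_0=c\prod_{i=1}^m w_0^{(i)}$ with $c=\min\{L,\min_i C_i\}>0$. By Lemma~\ref{prod-rpw} the finite product $\prod_i w_0^{(i)}$ is a rational product weight, and multiplying by the positive constant $c$ keeps it one (absorb $c$ into the polynomial prefactor of Definition~\ref{rpw-def} with trivial numerator and denominator). To verify $w_0\le w$ I split $K$: on $K\setminus\bigcup_i U_i$ all factors are $\le 1$ and $w\ge L\ge c$, so $w_0\le c\le w$; on a fixed $U_{i_0}$ we have $w_{i_0}=w$, hence $w_0^{(i_0)}\le w/C_{i_0}$, while the remaining factors are $\le 1$, giving $w_0\le (c/C_{i_0})w\le w$. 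This yields~\eqref{w0-w-ineq} with constant $1$, i.e.\ $w_0\le w$ on $K$. The one genuinely delicate point—already isolated in the choice of the $w_i$—is that one cannot demand $w_i\le w$ globally, since that would force $w_i$ to vanish at the other $x_j$ and destroy the single-zero hypothesis of Theorem~\ref{single-zero-w}; instead $w_i$ is made to coincide with $w$ only on $U_i$, and the product structure together with the uniform bound $w_0^{(j)}\le 1$ off $U_{i_0}$ is precisely what recovers the global inequality.
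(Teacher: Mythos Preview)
Your proof is correct and follows essentially the same approach as the paper's: localize to each $x_i$, apply Theorem~\ref{single-zero-w} to a suitable single-zero modification of $w$, and combine the resulting infinite-product weights via Lemma~\ref{prod-rpw}. The only difference is bookkeeping---the paper sets $w_j=1$ off $U_j$ and introduces an extra factor $w_{m+1}$ so that $w=w_1\cdots w_{m+1}$ exactly and the individual inequalities $C_jw_{0,j}\le w_j$ multiply directly, whereas you set $w_i=L$ off $U_i$ and instead use the bound $w_0^{(i)}\le1$ from Theorem~\ref{prod-w-thm} to handle the other factors.
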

	\begin{proof}
        Let $U_j$'s be disjoint neighborhoods of $x_j$'s and define $w_j=w$ on $U_j$ and $w_j=1$ on $K\bs U_j$ for $j=1,\dots, m$. In addition, let $w_{m+1}=1$ on $\cup_{j=1}^m U_j$ and $w_{m+1}=w$ on $K\bs(\cup_{j=1}^m U_j)$ so that $w=w_1\cdots w_{m+1}$. Then the weights $w_1,\dots,w_{m+1}$ satisfy the assumptions of Theorem~\ref{single-zero-w} and hence there are constants $C_j>0$ and weights $w_{0,j}$ of the form \eqref{prod-w} satisfying the assumptions of Theorem~\ref{prod-w-thm} such that $C_j w_{0,j} \le w_j$ on $K$ for each $j=1,\dots,m+1$. Hence $w_0:=C_1w_{0,1}\cdots C_{m+1}w_{0,m+1}$ satisfies $w_0 \le w_1\cdots w_{m+1} = w$. Moreover, since each $C_jw_j$ is a rational product weight, by  Lemma~\ref{prod-rpw}, $w_0$ is also a rational product weight.
	\end{proof}

	\begin{theorem}\label{ALB-Cheb-thm}
		Let $K\subset\bb R$ be a finite gap compact set and suppose $w$ is a Riemann integrable weight on $K$ and either
		\begin{enumerate}[\quad$(a)$]
			\item there exists a rational product weight $w_0$ such that $w_0\le w$ on $K$ or
			\item $w$ is bounded away from zero outside of arbitrarily small neighborhoods of some finite set $X=\{x_1,\dots,x_m\}\subset K$.
		\end{enumerate}
		Then,
		\begin{align}\label{ALB-Cheb}
			\liminf_{n\to\infty}W_{\infty,n}(K,w)\geq 2 S(K,w).
		\end{align}
	\end{theorem}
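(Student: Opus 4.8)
The first thing I notice is that case (b) reduces to case (a) via Corollary~\ref{multi-zero-w}: if $w$ is bounded away from zero outside small neighborhoods of a finite set $X$, then (since $w$ is Riemann integrable, hence bounded, and a Szeg\H{o} class weight — Riemann integrability and boundedness on a finite gap set guarantee $S(K,w)>0$ provided $w$ is not too degenerate, which the hypotheses ensure) there is a rational product weight $w_0$ with $w_0\le w$. So the whole theorem follows once case (a) is established. I would state this reduction at the outset and then work exclusively under hypothesis (a).

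Let me think about the core inequality.

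Under (a) I have a rational product weight $w_0$ with $w_0\le w$ on $K$. The key comparison idea is to split $w$ multiplicatively. Writing $w = w_0\cdot(w/w_0)$, the factor $v:=w/w_0$ satisfies $v\ge 1$, but that points the wrong way. Instead the natural approach is to compare $W_{\infty,n}(K,w)$ from below using the already-established non-asymptotic bound for $w_0$ and Widom's asymptotic bound for the bounded-away-from-zero part. The plan is: for any $\eps>0$, choose a rational product weight $w_0$ and then set $w_1 = w/w_0$ clipped or regularized so that $w_1$ is Riemann integrable and bounded away from zero. I expect the clean route is to fix a small $\delta>0$, let $w_0^{(\delta)}$ be a rational product weight concentrating its zeros near the zero set of $w$ with $w_0^{(\delta)}\le w$, and write $w = w_0^{(\delta)} w_1^{(\delta)}$ where $w_1^{(\delta)}=w/w_0^{(\delta)}$ is now Riemann integrable and bounded away from zero on $K$. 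Widom's bound \eqref{Wid-ALB-Cheb} applies to $w_1^{(\delta)}$, giving $\liminf_n W_{\infty,n}(K,w_1^{(\delta)})\ge 2S(K,w_1^{(\delta)})$, while Theorem~\ref{prod-w-thm2} gives the non-asymptotic bound $W_{\infty,n}(K,w_0^{(\delta)})\ge 2S(K,w_0^{(\delta)})$.

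The mechanism to combine them is the submultiplicativity of Chebyshev norms under the comparison $\|w_0 w_1 P\|_K$. Concretely, since $T_{n,w}$ is the minimizer for the full weight $w=w_0w_1$, and since $w_0\ge$ (a rational product weight bound), I would lower-bound $\|wT_{n,w}\|_K = \|w_0 w_1 T_{n,w}\|_K$ by passing to the preimage-set machinery underlying Theorem~\ref{prod-w-thm2}: the non-asymptotic factor-of-two from the rational product part $w_0$ composes with the asymptotic factor-of-two only if one is careful not to double-count. The correct statement is that $W_{\infty,n}(K,w)$ should pick up one overall factor of $2$, not $4$. So the combination is additive on the level of Szeg\H{o} factors ($S(K,w)=S(K,w_0)S(K,w_1)$ by multiplicativity) but the factor $2$ must come from only one source. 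I would therefore use Widom's asymptotic argument as the backbone on the interval part and use the rational-product structure only to control the contribution of $\log w_0$ to the Szeg\H{o} factor in the limit, letting $\delta\to0$ so that $S(K,w_1^{(\delta)})\to S(K,w)$ while the harmless multiplicative error from $w_0^{(\delta)}$ tends to $1$.

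The main obstacle, which I would address most carefully, is precisely this bookkeeping of the factor $2$: ensuring that inserting the rational product weight to absorb the zeros of $w$ does not cost a spurious constant, and that in the limit $\delta\to0$ the Szeg\H{o} factor $S(K,w_0^{(\delta)}w_1^{(\delta)})$ stays equal to $S(K,w)$ while the asymptotic lower bound survives. The cleanest way to handle this is to prove, for each fixed approximating weight, an inequality of the form $\liminf_n W_{\infty,n}(K,w) \ge 2\,S(K,w_1^{(\delta)})$ by relating $T_{n,w}$ to the extremal problem for $w_1^{(\delta)}$ (using $w\ge c\,w_1^{(\delta)}$ on the complement of a shrinking neighborhood of the zeros, together with continuity of $w_1^{(\delta)}$), and then letting $\delta\to0$ using $S(K,w_1^{(\delta)})\to S(K,w)$, which holds by monotone/dominated convergence for $\log$. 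The delicate point is controlling the behavior of $T_{n,w}$ near the zeros of $w$, where the weight vanishes; this is where Riemann integrability of $w$ and the square-root lower bound \eqref{wK-sqroot} for the equilibrium density on the interval components enter, exactly as in the proof of Theorem~\ref{single-zero-w}.
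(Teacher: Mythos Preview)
Your reduction of (b) to (a) via Corollary~\ref{multi-zero-w} is correct and matches the paper. But your plan for (a) has a real gap.

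You propose to factor $w=w_0^{(\delta)}w_1^{(\delta)}$ with $w_1^{(\delta)}=w/w_0^{(\delta)}$ bounded away from zero, and then invoke Widom's bound \eqref{Wid-ALB-Cheb} on $w_1^{(\delta)}$. The difficulty you yourself flag---the ``bookkeeping of the factor $2$''---is not a bookkeeping issue but a genuine obstruction: since $w_0^{(\delta)}\le1$ implies $w\le w_1^{(\delta)}$, the monotonicity of Chebyshev norms gives $W_{\infty,n}(K,w)\le W_{\infty,n}(K,w_1^{(\delta)})$, which is the wrong direction for a lower bound. Your fallback, to use $w\ge c\,w_1^{(\delta)}$ away from the zeros, fails near the zeros where $w_1^{(\delta)}$ blows up relative to $w$, and you have no control on $T_{n,w}$ there. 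Moreover, your limiting claim $S(K,w_1^{(\delta)})\to S(K,w)$ is wrong as stated: by multiplicativity $S(K,w_1^{(\delta)})=S(K,w)/S(K,w_0^{(\delta)})$, and if $w_0^{(\delta)}$ is genuinely absorbing the zeros of $w$ then $S(K,w_0^{(\delta)})$ does not tend to $1$.

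The paper does not use Widom's result \eqref{Wid-ALB-Cheb} at all. Instead it approximates $w$ \emph{from below} by a sequence of rational product weights $w_j\le w$, constructed as $w_j=w_0/P_j$ where $P_j$ are polynomials (with zeros off $K$) chosen so that $w_0/w\le P_j\le 2$ and $P_j\to w_0/w$ a.e.; Riemann integrability of $w$ is what makes such $P_j$ exist. Then $w_j\le w$ gives the comparison in the right direction, $W_{\infty,n}(K,w_j)\le W_{\infty,n}(K,w)$, and Theorem~\ref{prod-w-thm2} supplies the non-asymptotic bound $W_{\infty,n}(K,w_j)\ge 2S(K,w_j)$ for each fixed $j$ and all large $n$. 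Taking $\liminf_n$ and then $j\to\infty$ (with $S(K,w_j)\to S(K,w)$ by dominated convergence, using $\frac12 w_0\le w_j$) finishes the proof. The key idea you are missing is that the rational product weight $w_0$ is not used as a multiplicative factor but as a lower envelope from which one builds the approximating sequence.
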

	\begin{proof}
        The inequality is trivial for non-Szeg\H{o} class weights, so assume $S(K,w)>0$.

		(a) Since $w$ is Riemann integrable, it is bounded on $K$ and continuous at almost every point of $K$. Let $f(x)=w_0(x)/w(x)$ for all $x\in K$ where $w(x)\neq0$ and $f(x)=1$ where $w(x)=0$. Then $f\le 1$ on $K$ and $f$ is continuous at a.e.\ point of $K$. It follows that there is a sequence of polynomials $P_j$ with zeros outside of $K$ such that $f\le P_j\le2$ on $K$ and $P_j\to f$ pointwise a.e.\ on $K$. Let $w_j(x)=w_0(x)/P_j(x)$. Then each $w_j$ is a rational product weight satisfying $\frac12 w_0\le w_j\le w$ on $K$ and $w_j\to w$ a.e.\ on $K$. Hence
		\begin{align}
			\|w_j T_{n,w_j}\|_K
			\le \|w_j T_{n,w}\|_K
			\le \|w T_{n,w}\|_K
		\end{align}
		so $W_{\infty,n}(K,w_j) \le W_{\infty,n}(K,w)$.
		
		 By Theorem~\ref{prod-w-thm2}, $\liminf_{n\to\infty}\|w_jT_{n,w_j}\|_K\ge2S(K,w_j)$, hence
		\begin{align}\label{W-Swj}
			\liminf_{n\to\infty}W_{\infty,n}(K,w) \ge \liminf_{n\to\infty}W_{\infty,n}(K,w_j) \ge 2S(K,w_j).
		\end{align}
		Since $w_0$ is in the Szeg\H{o} class and $\frac12 w_0\le w_j$, it follows from the dominated convergence theorem that $S(K,w_j)\to S(K,w)$. Thus, taking $j\to\infty$ in \eqref{W-Swj} yields \eqref{ALB-Cheb}.
		
		(b) follows from (a) since by Corollary~\ref{multi-zero-w} there is a rational product weight $w_0$ such that $w_0\le w$ on $K$.
	\end{proof}

	\begin{theorem}\label{Bern-extension}
		Let $w$ be a Riemann integrable weight on an interval $K=[\al,\be]$ such that either
		\begin{enumerate}[\quad$(a)$]
			\item there exists a rational product weight $w_0$ such that $w_0\le w$ on $K$, or
			\item $w$ is bounded away from zero outside of arbitrarily small neighborhoods of some finite set $X=\{x_1,\dots,x_m\}\subset K$, or
            \item $w$ is not of the Szeg\H{o} class, that is, $S(K,w)=0$.
		\end{enumerate}
		Then,
		\begin{align}\label{Bern-Asympt}
			\lim_{n\to\infty}W_{\infty,n}(K,w) = 2 S(K,w).
		\end{align}
	\end{theorem}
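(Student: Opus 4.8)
The plan is to establish the two matching one-sided bounds $\liminf_{n\to\infty}W_{\infty,n}(K,w)\ge 2S(K,w)$ and $\limsup_{n\to\infty}W_{\infty,n}(K,w)\le 2S(K,w)$ separately. The first is essentially already in hand: an interval $K=[\al,\be]$ is a (one-component) finite gap set and is regular, so Theorem~\ref{ALB-Cheb-thm} applies, and under hypothesis $(a)$ or $(b)$ it yields $\liminf_{n\to\infty}W_{\infty,n}(K,w)\ge 2S(K,w)$ at once. Under hypothesis $(c)$ we have $S(K,w)=0$, so the lower bound $\liminf_{n\to\infty}W_{\infty,n}(K,w)\ge 0=2S(K,w)$ is trivial. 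Thus the lower bound holds in all three cases, and the remaining task is the upper bound, which I claim holds for \emph{every} Riemann integrable weight on $K$ with no further hypotheses.

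For the upper bound I would compare $w$ from above with its truncations. For $\de>0$ set $w_\de=\max\{w,\de\}$; this is a bounded weight function on $K$ that is Riemann integrable (the maximum of a Riemann integrable function and a constant) and bounded away from zero. The crucial monotonicity observation is that $w\le w_\de$ forces
\[
t_n(K,w)=\|wT_{n,w}\|_K\le\|wT_{n,w_\de}\|_K\le\|w_\de T_{n,w_\de}\|_K=t_n(K,w_\de),
\]
where the first inequality uses that $T_{n,w}$ minimizes the $w$-weighted sup-norm among monic polynomials and the second uses $w\le w_\de$. Hence $W_{\infty,n}(K,w)\le W_{\infty,n}(K,w_\de)$ for every $n$. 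Since $w_\de$ is Riemann integrable and bounded away from zero, Bernstein's theorem \eqref{doubled limit cheb} (in its base case, where the factor $w_0$ is absent) gives $\lim_{n\to\infty}W_{\infty,n}(K,w_\de)=2S(K,w_\de)$, so that $\limsup_{n\to\infty}W_{\infty,n}(K,w)\le 2S(K,w_\de)$ for each fixed $\de>0$.

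It then remains to let $\de\downarrow 0$ and check $S(K,w_\de)\to S(K,w)$. Here $w_\de\downarrow w$ pointwise, and assuming without loss $\de\le M:=\|w\|_K$ we have $\log w_\de\le\log M$, so $\log M-\log w_\de$ is a nonnegative sequence increasing pointwise to $\log M-\log w$ (equal to $+\infty$ where $w=0$). The monotone convergence theorem gives $\int_K\log w_\de\,d\mu_K\to\int_K\log w\,d\mu_K$ (finite when $S(K,w)>0$, and $-\infty$ when $S(K,w)=0$), hence $S(K,w_\de)\downarrow S(K,w)$ in every case. Combining, $\limsup_{n\to\infty}W_{\infty,n}(K,w)\le\inf_{\de>0}2S(K,w_\de)=2S(K,w)$, which together with the lower bound yields \eqref{Bern-Asympt}.

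The only nontrivial external input is Bernstein's asymptotics for $w_\de$, namely $\lim_n W_{\infty,n}(K,w_\de)=2S(K,w_\de)$ for a Riemann integrable weight bounded away from zero; this is the classical zero-free case of \eqref{doubled limit cheb} (one may, if a literal application is preferred, insert a factor $|x-\al|^{\beta}$ and let $\beta\downarrow 0$), so I expect no genuine difficulty beyond invoking it correctly. The conceptual point that makes the argument succeed uniformly across the three hypotheses is that the $\limsup$ requires no control whatsoever on the zeros of $w$: truncating from below replaces every zero by the harmless floor $\de$, and the Szeg\H{o} factors then decrease back to $S(K,w)$ by monotone convergence. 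In this way all the delicate phenomena permitted by the hypotheses—strong zeros, infinitely many zeros, or failure of the Szeg\H{o} condition—are confined to the lower bound, where they are handled by Theorem~\ref{ALB-Cheb-thm} or are vacuous.
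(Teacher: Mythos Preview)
Your proof is correct and follows essentially the same strategy as the paper: the lower bound comes from Theorem~\ref{ALB-Cheb-thm} (trivially in case~(c)), and the upper bound is obtained by truncating $w$ from below to a weight bounded away from zero, invoking the classical asymptotics there, and passing to the limit via monotone convergence. The only cosmetic differences are that the paper truncates with $w+\eps$ rather than $\max\{w,\de\}$ and, for the upper bound of the truncated weight, cites Widom's $\limsup$ inequality \cite[Theorem~11.4]{Wid69} (after passing to the upper semi-continuous modification of $w$) instead of Bernstein's full asymptotics \eqref{doubled limit cheb}.
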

	\begin{proof}
		Since by assumption $w$ is Riemann integrable on $K$, it is continuous at almost every point of $K$ and hence $w(x)$ equals its upper semi-continuous modification
		\begin{align}
		\hat w(x)=\lim_{r\to0^+}\sup_{t\in K,|t-x|<r}w(t)
		\end{align}
		a.e.\ on $K$. Then $S(K,\hat w)=S(K,w)$ and it is also known (see e.g.\ \cite[Lemma~1]{NSZ21}) that $W_{\infty,n}(K,\hat w)=W_{\infty,n}(K,w)$ for all $n$. Thus, by replacing $w$ with $\hat w$ we may assume without loss of generality that $w$ is upper semi-continuous.

		For a bounded, non-negative, upper semi-continuous, Szeg\H{o} class weight $w$ on an interval $K$, Widom showed in \cite[Theorem~11.4]{Wid69} that,
		\begin{align}\label{Wid-UB}
			\limsup_{n\to\infty}W_{\infty,n}(K,w) \le 2 S(K,w).
		\end{align}
		This asymptotic upper bound also holds for non-Szeg\H{o} class weights $w$. Indeed, let $w_\eps=w+\eps$, where $\eps>0$ is a constant. Then the weight $w_\eps$ is upper semi-continuous and bounded away from zero hence $w$ is of the Szeg\H{o} class. Thus, \eqref{Wid-UB} holds for $w_\eps$. Since $w\le w_\eps$ we have $W_{\infty,n}(K,w) \le W_{\infty,n}(K,w_\eps)$ for all $n$ and hence $\limsup_{n\to\infty}W_{\infty,n}(K,w) \le 2 S(K,w_\eps)$ for each $\eps>0$. Taking $\eps\to0$ then yields $S(K,w_\eps)\to S(K,w)$ hence \eqref{Wid-UB} holds for all upper semi-continuous weights.

		The corresponding asymptotic lower bound,
        \begin{align}
            \liminf_{n\to\infty}W_{\infty,n}(K,w) \ge 2 S(K,w),
		\end{align}
		trivially holds for non-Szeg\H{o} class weights and follows from Theorem~\ref{ALB-Cheb-thm} for Szeg\H{o} class weights. The two matching asymptotic bounds then give \eqref{Bern-Asympt}.
	\end{proof}
	
    \begin{remark}
		The assumption (b) in Theorems~\ref{ALB-Cheb-thm} and~\ref{Bern-extension} is satisfied for $w=w_0w_1$, where $w_1$ is a Riemann integrable weight bounded away from zero on $K$ and $w_0$ is an arbitrary continuous weight with at most finitely many zeros on $K$. The assumption (a) in Theorems~\ref{ALB-Cheb-thm} and~\ref{Bern-extension} is satisfied for any rational product weight $w$ (Definition~\ref{rpw-def}), in particular, (a) holds for some continuous, Szeg\H{o} class weights with infinitely many zeros.
    \end{remark}

	\section{Lower bounds for $L^2$ norm}
	
	In this section we derive lower bounds for $L^2$ norms of monic polynomials. We assume that $w$ is a weight function on $K$ with $w\in L^1(K,d\mu_K)$, $\|w\|_{L^1(K,d\mu_K)}>0$ and define $d\mu=wd\mu_K$. As is well known the $n$-th degree monic orthogonal polynomial $P_n$ minimizes the $L^2(K,d\mu)$ norm among all monic polynomials of degree $n$. The Widom factors for the $L^2(K,d\mu)$ norm are defined in \eqref{l2 wid}.
	
	We start with a preparatory result on the invariance of harmonic measures under the universal covering map. While there are several versions of Theorem~\ref{pre1} in the literature, neither is stated in a suitable form or sufficient generality for our purposes. In \cite[Section 2.4]{Fi83}, regularity of the set is assumed and in \cite[Section 3.7B]{Hasumi}, Martin boundary is considered instead of Euclidean boundary. %So we give a proof the result below, which has the most general and suitable form for problems involving extremal polynomials on the real line.
We use $\omega_{\Omega_K}(a;\cdot)$ to denote the harmonic measure for $\Omega_K$ at $a\in\Omega_K$.

	\begin{theorem}\label{pre1}
		Let $K$ be a non-polar compact subset of $\mathbb{C}$. Let $\x:\bb D\rightarrow\Omega_K$ be the universal covering map uniquely fixed by the conditions $\x(0)=\infty$ and $\lim_{z\rightarrow 0} z \x(z)>0$. Let $\x^*$ denote the non-tangential limit of $\x$ on $\partial \bb{D}$. Let $a\in\Omega_K$ and $z\in \bb D$ such that $\x(z)=a$. Then for any $f\in L^1(K,d\mu_K)$, we have
		\begin{align}\label{K2dD12}
			\int_K f(t)d \omega_{\Omega_K}(a;t) = \int_0^{2\pi}\Re\frac{e^{i\te}+z}{e^{i\te}-z}f(\x^*(e^{i\te}))\frac{d\te}{2\pi},
		\end{align}
		and in particular
			\begin{align}\label{K2dD11}
			\int_K f(t)d\mu_K(t) = \int_0^{2\pi}f(\x^*(e^{i\te}))\frac{d\te}{2\pi}.
		\end{align}
	\end{theorem}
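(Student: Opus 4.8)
The plan is to transfer the Dirichlet problem on $\Om_K$ to the unit disk through the covering map $\x$, where harmonic measure is the Poisson kernel and Fatou's theorem is available. Write $P(z,e^{i\te})=\Re\frac{e^{i\te}+z}{e^{i\te}-z}$ for the Poisson kernel. It suffices to prove \eqref{K2dD12} for bounded continuous $f$ and then remove both restrictions. Indeed, setting $z=0$ in \eqref{K2dD12} gives \eqref{K2dD11}, since $\x(0)=\infty$, $P(0,\cdot)\equiv1$, and $\omega_{\Om_K}(\infty;\cdot)=\mu_K$ is the classical identification of the equilibrium measure with harmonic measure at infinity (see, e.g., Ransford). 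Once \eqref{K2dD11} holds for continuous $f$, the map $f\mapsto f\circ\x^*$ is an isometry from $L^1(K,d\mu_K)$ into $L^1(\pd\bb D,\tfrac{d\te}{2\pi})$, so approximating a general $f\in L^1(K,d\mu_K)$ by continuous functions and using that $P(z,\cdot)$ is bounded for fixed $z$ and that $\omega_{\Om_K}(a;\cdot)\ll\mu_K$ with bounded density (by Harnack) lets me pass to the limit on both sides of \eqref{K2dD12}.

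The first ingredient is the boundary behavior of $\x$ itself. Pulling back the Green function \eqref{GrFn}, the function $g_K\circ\x$ is positive and harmonic on $\bb D$ away from the orbit $\Gamma\cdot0=\x^{-1}(\infty)$ of the deck group $\Gamma$, with a simple logarithmic pole at each of these points; by the standard description of Green functions on covers it equals $\sum_{\gamma\in\Gamma}g_{\bb D}(\cdot,\gamma(0))=-\log|B|$, where $B$ is the Blaschke product with zero set $\Gamma\cdot0$. The Blaschke condition $\sum_\gamma(1-|\gamma(0)|)<\infty$ is precisely the statement that $\Om_K$ is Greenian, i.e.\ that $K$ is non-polar. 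Using \eqref{GrAs} and the normalization $\lim_{z\to0}z\x(z)>0$ one checks that $g_K(w)-\log^+|w|$ is bounded below on $\Om_K$, so $\log^+|\x|\le g_K\circ\x+C=-\log|B|+C$; since the integral means of $-\log|B|$ are bounded (Blaschke condition), $\x$ is of bounded characteristic and hence has non-tangential limits $\x^*$ a.e.\ on $\pd\bb D$. Because $|B^*|=1$ a.e., we obtain $g_K(\x^*)=0$ a.e., so $\x^*(e^{i\te})\in\pd\Om_K\subset K$ for a.e.\ $\te$.

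The second ingredient identifies the boundary values of the lifted solution. For bounded continuous $f$, let $u$ be the Perron--Wiener--Brelot solution of the Dirichlet problem on $\Om_K$ with data $f|_{\pd\Om_K}$, so that $u(a)=\int_K f\,d\omega_{\Om_K}(a;\cdot)$ by definition of harmonic measure. The lift $v=u\circ\x$ is bounded and harmonic on $\bb D$, hence $v(z)=\int_0^{2\pi}P(z,e^{i\te})\,v^*(e^{i\te})\,\tfrac{d\te}{2\pi}$ with $v^*$ its non-tangential limit. I claim $v^*(e^{i\te})=f(\x^*(e^{i\te}))$ a.e. At a.e.\ $\te$ the limit $t=\x^*(e^{i\te})\in\pd\Om_K$ exists; if $t$ is a regular boundary point then $u(w)\to f(t)$ as $w\to t$, so $v(z)=u(\x(z))\to f(t)$ as $z\to e^{i\te}$ non-tangentially. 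By Kellogg's theorem the irregular points form a polar set $E$, so it remains to show $\{\te:\x^*(e^{i\te})\in E\}$ is Lebesgue-null. Choosing a function that is positive and superharmonic near $\pd\Om_K$ and identically $+\infty$ on $E$, its pullback is positive superharmonic on $\bb D$ and, by lower semicontinuity, tends to $+\infty$ wherever $\x^*\in E$; but a positive superharmonic function on $\bb D$ has finite non-tangential limits a.e.\ (Riesz decomposition together with Fatou's theorem), so that set is null. With the claim in hand, evaluating the Poisson representation of $v$ at any $z$ with $\x(z)=a$ yields \eqref{K2dD12} for bounded continuous $f$.

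The main obstacle is exactly this a.e.\ identification $v^*=f\circ\x^*$, i.e.\ the fact that $\x^*$ avoids the polar irregular set for a.e.\ $\te$. This has to be established \emph{before} the measure obtained by pushing $\tfrac{d\te}{2\pi}$ forward under $\x^*$ is recognized as $\mu_K$, to avoid circularity; the superharmonic pullback argument above does precisely this, showing directly that the image of $\tfrac{d\te}{2\pi}$ under $\x^*$ charges no polar set. Once this is secured, the remaining steps---the Poisson representation on $\bb D$, the specialization $z=0$ to the equilibrium measure, and the $L^1$ approximation---are routine.
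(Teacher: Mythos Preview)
Your argument is essentially the same as the paper's: both lift the Perron solution $H_f\circ\x$ to a bounded harmonic function on $\bb D$, identify its non-tangential boundary values with $f\circ\x^*$ using that the irregular boundary points form a polar set avoided by $\x^*$ a.e., and then pass from continuous $f$ to general $f$. The only notable differences are in packaging: the paper cites Tsuji (Theorem~VIII.44) for the fact that $(\x^*)^{-1}(E)$ has measure zero when $E$ is polar, whereas you supply a direct proof via the superharmonic pullback (this is essentially how Tsuji's result is proved, so no genuine divergence); and the paper frames the conclusion as an identification of the push-forward of the Poisson measure with $\omega_{\Om_K}(a;\cdot)$, which makes the extension from continuous to $L^1(K,d\mu_K)$ automatic once the measures coincide, whereas you argue the $L^1$ extension separately via the isometry and Harnack. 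Both routes are valid; the measure-identification framing is slightly cleaner because it sidesteps the need to verify $\omega_{\Om_K}(a;\cdot)\ll\mu_K$ as a separate step.
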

	\begin{proof}
		Let $d\mu_{z;\partial \bb{D}}$ be $\Re\frac{e^{i\te}+z}{e^{i\te}-z}\frac{d\te}{2\pi}$ and $d\mu_{\partial \bb D}$ be the normalized Lebesgue measure on $\partial \bb D$. Since $\x$ is a universal covering map, the non-tangential limit $\x^*(e^{i\theta})$ exists a.e. $d\theta$ and $\x^*(e^{i\theta})\in\partial \Omega_K$, a.e. $d\theta$, see e.g. \cite[Chapter~7]{Nev70}.
		
		Let $\nu$ be the push-forward measure of   $d\mu_{z;\partial \bb{D}}$ through $\x^*$. Thus, for every $u\in L^1(d\nu)$, we have
		\begin{align}\label{preser1}
			\int u\,d\nu = \int_0^{2\pi}u(\x^*(e^{i\te})) d\mu_{z;\partial \bb{D}}(\theta).
		\end{align}
		
		Let $E$ be the set of irregular boundary points and $F$ be the set of regular boundary points of the domain $ \Omega_K$. Then $E$ is a $F_\sigma$ polar set, see e.g. \cite[Theorem 4.2.5]{Ran95}. In view of
		\cite[Theorem~VIII.44]{Tsuji}, $ \mu_{\partial \bb{D}} \left({(\x^*)}^{-1}(E)\right)=0$ and thus $ \mu_{\partial \bb{D}} \left({(\x^*)}^{-1}(F)\right)=1$.
		Let $u$ be a continuous real-valued function on $\partial\Omega_K$ and $H_u$ denote the solution of the Dirichlet problem in $\Omega_K$ for the boundary function $u$. Then, for any $z_0\in F$,
		\begin{equation}
			\lim_{z\rightarrow z_0, z\in \Omega_K} H_u(z)= u(z_0),
		\end{equation}
		see e.g. \cite[Theorem~2.1, Appendix~A.2]{ST97}.
		Therefore,
		\begin{equation}
			\lim_{r\uparrow 1} H_u(\x(r e^{i\theta}))= u(\x^*(e^{i\theta}))\,\, \mathrm{a.e.} \,\, d\theta.
		\end{equation}
		Hence, $H_u(\x)$ is harmonic and bounded in $\bb D$ with non-tangential boundary values  $u(\x^*(e^{i\theta}))$ for $d\theta$ a.e.
		Therefore,
		\begin{equation}\label{preser2}
			\int_0^{2\pi}u(\x^*(e^{i\te}))d\mu_{z;\partial \bb{D}}(\theta)= H_u(\x(z))=H_u(a)=\int u(t)\,d \omega_{\Omega_K}(a;t)
		\end{equation}
		Clearly, \eqref{preser2} holds for any complex valued continuous function $u$ on $\partial \Omega_K$.
		Combining \eqref{preser1} and \eqref{preser2}, we see that, for any continuous $u:\partial \Omega_K \rightarrow \bb C$,
		\begin{equation}
			\int u\, d\nu = \int u(t)\,d \omega_{\Omega_K}(a;t).
		\end{equation}
		Since $\mathrm{supp}(\nu)\subset \partial \Omega_K$, this implies that $d\,\nu=d\, \omega_{\Omega_K}(a;\cdot)$. So \eqref{preser1} implies \eqref{K2dD12}. The equation \eqref{K2dD11} is just a special case of \eqref{K2dD12} for $z=0$ and $a=\infty$.
	\end{proof}

	\begin{theorem}\label{ALB-OP-Thm}
		Let $K\subset\bb R$ be a non-polar compact set and $w$ be a weight function on $K$ such that $w\in L^1(K,d\mu_K)$ and $S(K,w)>0$. Then,
		\begin{align}\label{ALB-OP}
			\liminf_{n\to\infty}[W_{2,n}(K,w)]^2\geq 2 S(K,w).
		\end{align}
		Suppose, in addition that, $K$ is regular and the weight $w$ is a rational function,
		\begin{align}\label{w-rat}
			w(x)=c \frac{\prod_{j=1}^{d_0}(x-a_j)} {\prod_{j=1}^{d_1}(x-b_j)},
		\end{align}
		which is bounded on $K$ and let $d=d_1-d_0$. Then for all $n>d/2$,
		\begin{align}\label{LB1-OP}
			[W_{2,n}(K,w)]^2 \geq \frac{2S(K,w)}{1 + \sqrt{1-\exp\bigl[-2\sum_{j=1}^{d_0}g_K(a_j)\big]}}.
		\end{align}
		Furthermore, assume that $a_j\in K$ for each $j$. Then for all $n>d/2$,
		\begin{align}\label{LB2-OP}
			[W_{2,n}(K,w)]^2 \geq  2S(K,w).
		\end{align}		
		The equality in \eqref{LB2-OP} is attained for some $n>d/2$ if and only if there is a polynomial $Q_n$ of degree $n$ such that its zeros are disjoint from $\{b_j\}_{j=1}^{d_1}$ and
		\begin{equation}\label{ort rat inv1}
			K=(w Q_n^2)^{-1}([0,1]).
		\end{equation}
		In that case, $P_n=T_{n,\sqrt{w}}$ and $Q_n=r P_{n}$, where $r$ is a non-zero real number.
	\end{theorem}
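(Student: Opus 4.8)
The plan is to prove the three claims of Theorem~\ref{ALB-OP-Thm} in sequence, relying on the $L^\infty$ lower bounds and the harmonic-measure machinery already established.

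For the asymptotic bound \eqref{ALB-OP}, I would compare the $L^2$ Widom factor to the $L^\infty$ one. Since $P_n$ minimizes the $L^2(\mu)$ norm among monic polynomials, we may bound $\|P_n\|_{L^2(\mu)}$ from below using orthogonality and the extremality of $T_{n,w}$-type objects, but the cleaner route is to invoke Theorem~\ref{pre1}. The idea is to lift everything to the disk via the universal covering map $\x$: by \eqref{K2dD11}, $\int_K \log w\, d\mu_K = \int_0^{2\pi}\log w(\x^*(e^{i\te}))\,\frac{d\te}{2\pi}$, so the Szeg\H{o} factor $S(K,w)$ becomes the geometric mean of a weight on the circle. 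Then the $L^2$ lower bound asymptotics should follow from a Szeg\H{o}-type argument on the disk, where the factor $2$ arises from the double cover behavior of $\x^*$ over $\bb R$ (each boundary point of $K\subset\bb R$ is approached from two sides). This is the step I expect to contain the real content of the first assertion; the remaining inequalities are essentially non-asymptotic refinements.

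For the non-asymptotic bounds \eqref{LB1-OP} and \eqref{LB2-OP}, I would mirror the proof of Theorem~\ref{LB-Thm}. Writing $w=\sqrt{R}\cdot\sqrt{R}$ and noting $P_n^2 w$ is a rational function, I would form the preimage set $L_n=(wP_n^2)^{-1}\big([0,\|\sqrt{w}\,P_n\|_K^2]\big)$ and apply Lemma~\ref{CapPreImg} to relate $\ca(L_n)$ to the relevant extremal quantity. The key difference from the $L^\infty$ case is that $\|P_n\|_{L^2(\mu)}^2 = \int P_n^2 w\, d\mu_K$ is an average rather than a sup, so I would use the representation of $d\mu_K$ as harmonic measure at infinity (Theorem~\ref{pre1}) together with a quadrature/Jensen argument to extract the lower bound. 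The denominator $1+\sqrt{1-\exp[-2\sum g_K(a_j)]}$ in \eqref{LB1-OP} suggests that the estimate is obtained by solving a quadratic inequality: bounding the $L^2$ average below in terms of both the capacity term and a correction coming from the zeros $a_j$ lying inside (or on) $K$, then optimizing. When all $a_j\in K$ we have $g_K(a_j)=0$, the square-root term vanishes, the denominator collapses to $1$, and \eqref{LB2-OP} follows immediately from \eqref{LB1-OP}.

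For the equality characterization, I would argue exactly as in the final part of Theorem~\ref{LB-Thm}. Equality in \eqref{LB2-OP} forces all the intermediate inequalities—the quadrature estimate and the capacity comparison $\ca(K)\le\ca(L_n)$—to be equalities, giving $\ca(K)=\ca(L_n)$ and that the zeros of $P_n$ avoid $\{b_j\}$. Then Lemma~\ref{Lem:K=L}, applicable since $K\subset\bb R$ is polynomially convex and $L_n$ is regular by Lemma~\ref{CapPreImg}, yields $K=L_n$, which is precisely \eqref{ort rat inv1} after rescaling $Q_n=P_n/\|\sqrt{w}\,P_n\|_K$. The identification $P_n=T_{n,\sqrt w}$ should come from observing that equality forces $P_n$ to equilibrate the weighted sup-norm as well, so the $L^2$ and $L^\infty$ extremal polynomials coincide; the converse direction is a direct computation via Lemma~\ref{CapPreImg} establishing the reverse inequality. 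The main obstacle I anticipate is making the quadrature/Jensen step rigorous so that it produces the exact quadratic expression in \eqref{LB1-OP} rather than a weaker bound, since the equality analysis depends on tracking precisely where that estimate is sharp.
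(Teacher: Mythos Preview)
Your sketch for the asymptotic bound \eqref{ALB-OP} is roughly on target: the paper does lift to $\bb D$ via the covering map $\x$ and uses the Szeg\H{o}-type outer function $R$ with $|R(e^{i\te})|^2=w(\x^*(e^{i\te}))$ and $R(0)=\sqrt{S(K,w)}$. But the factor $2$ does not come from any ``double cover'' of the boundary. It comes from the algebraic identity $2|\Re(UB^n)|^2=1+\Re(U^2B^{2n})$ (with $U=R/|R|$), applied after writing the mean-value identity for $B^nP_n(\x)R$ and taking real parts (permissible since $P_n$ is real). Cauchy--Schwarz then gives
\[
2S(K,w)\le [W_{2,n}(K,w)]^2\Bigl(1+\Re\!\int U^2B^{2n}\tfrac{d\te}{2\pi}\Bigr),
\]
and the correction integral tends to $0$ by Bessel's inequality for the orthonormal system $\{B^m\}_{m\in\bb Z}$. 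This is the mechanism you need to make precise.

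Your plan for the non-asymptotic bound \eqref{LB1-OP}, however, has a genuine gap. Mimicking Theorem~\ref{LB-Thm} by forming the preimage set $L_n=(wP_n^2)^{-1}([0,\|\sqrt{w}P_n\|_K^2])$ and invoking Lemma~\ref{CapPreImg} only controls the \emph{sup-norm} $\|\sqrt{w}P_n\|_K$, not the $L^2$ norm $\int P_n^2 w\,d\mu_K$. A Jensen-type comparison between the two goes the wrong way (the integral is \emph{below} the sup), so no ``quadrature/Jensen'' step will produce a lower bound on $[W_{2,n}]^2$ from capacity data about $L_n$. The specific denominator $1+\sqrt{1-\exp[-2\sum g_K(a_j)]}$ does not arise from a quadratic in capacity; in the paper it comes from estimating the same correction term $\Re\int U^2B^{2n}\,\frac{d\te}{2\pi}$ as above. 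For rational $w$ one identifies $U^2$ explicitly as a unimodular constant times $B^{-d}\prod_j B(\cdot,p_j)\big/\prod_j B(\cdot,z_j)$ (the $z_j$ covering the zeros $a_j$ outside $K$), and then Bessel's inequality with the orthonormal pair $\{\cf,\ \prod_j B(\cdot,p_j)B^{2n-d}\}$ bounds the inner product by $\sqrt{1-|\prod_j B(0,z_j)|^2}=\sqrt{1-\exp[-2\sum g_K(a_j)]}$.

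Because your non-asymptotic argument is off, your equality analysis also misfires: there is no capacity inequality $\ca(K)\le\ca(L_n)$ being saturated here. In the paper, equality in \eqref{LB2-OP} forces equality in the Cauchy--Schwarz step, which yields a pointwise identity on $\pd\bb D$ and hence $\|\sqrt{w}P_n\|_K\le 2S(K,\sqrt{w})\ca(K)^n$. Combined with the already-proved $L^\infty$ lower bound \eqref{LB2-Cheb}, this forces $W_{\infty,n}(K,\sqrt{w})=2S(K,\sqrt{w})$, and only then does the equality case of Theorem~\ref{LB-Thm} give \eqref{ort rat inv1} and $P_n=T_{n,\sqrt{w}}$. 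The converse is handled by pulling $F_n=2wQ_n^2-1$ back through the Joukowsky map and computing the $L^2$ norm directly.
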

	
	\begin{proof}
		We will proceed by transferring the problem to the unit disk using the uniformization (see \cite[Chapter~2]{Fi83}, \cite[Chapters~13--16]{Mar19}).
		Let $\x:\bb D\rightarrow \overline{\bb C}\bs K$ be the universal covering map uniquely fixed by the conditions $\x(0)=\infty$ and $\lim_{z\rightarrow 0} z \x(z)>0$. Let $B(z)$ be the analytic function in $\mathbb{D}$ determined by
		\begin{align}\label{Bat0}
			|B(z)| = e^{-g_K(\x(z))} \;\text{ and }\;
			\lim_{z\to0}\x(z)B(z) = \ca(K)
		\end{align}	
		and $B(z,z_0)$ be the analytic function in $\bb D$ determined by
		\begin{align}\label{Bz0at0}
			|B(z,z_0)|= e^{-g_K(\x(z),\x(z_0))} \;\text{ and }\; B(0,z_0)>0.
		\end{align}
		The covering map has radial limits $\x^*(e^{i\te})\in K$ a.e.\ on $\pd\bb D$ and the equilibrium measure is preserved under the covering map (see Theorem~\ref{pre1}),
		\begin{align}\label{K2dD}
			\int_K f(t)d\mu_K(t) = \int_0^{2\pi}f(\x^*(e^{i\te}))\frac{d\te}{2\pi}
			\;\text{ for all $f\in L^1(K,d\mu_K)$.}
		\end{align}

		By assumption $w\in L^1(K,\mu_K)$ and $S(K,w)>0$ hence $\log(w(\cdot))\in L^1(K,d\mu_K)$ and so $\log(w\circ\x^*) \in L^1(\pd\bb D,\frac{d\te}{2\pi})$. It follows that there is an analytic function $R(z)$ on $\bb D$ with $R(0)>0$ and the boundary values $|R(e^{i\te})|^2=w(\x^*(e^{i\te}))$,
		\begin{align}
			R(z) = \exp\left(\frac12 \int_0^{2\pi} \frac{e^{i\te}+z}{e^{i\te}-z} \log[w(\x^*(e^{i\te}))]\frac{d\te}{2\pi} \right).
		\end{align}
		We note that $R\in H^2(\bb D)$ and by \eqref{K2dD} and \eqref{S-def},
        \begin{align}\label{R2at0}
            R(0)=\sqrt{S(K,w)}.
        \end{align}
		
		Let $P_n$ denote the $n$-th monic orthogonal polynomial in $L^2(d\mu)$ where $d\mu=wd\mu_K$. Since $B^n(z)P_n(\x(z))$ is a bounded analytic function on $\bb D$ we have $B^nP_n(\x(\cdot))R\in H^2(\bb D)$ and hence
		\begin{align}
			\int_{0}^{2\pi} B^n(e^{i\te})P_n(\x^*(e^{i\te}))R(e^{i\te}) \frac{d\theta}{2\pi}
			= \ca(K)^n R(0)
			= \ca(K)^n \sqrt{S(K,w)}.
		\end{align}
		Let $U(e^{i\te})=R(e^{i\te})/|R(e^{i\te})|$ so that $R(e^{i\te})=\sqrt{w(\x^*(e^{i\te})}U(e^{i\te})$. Since the orthogonal polynomials on the real line are necessarily real we have
		\begin{align}\label{OP-intg}
			\int_{0}^{2\pi} P_n(\x^*(e^{i\te}))\sqrt{w(\x^*(e^{i\te})} \Re[U(e^{i\te})B^n(e^{i\te})] \frac{d\theta}{2\pi}
			=\sqrt{S(K,w)}\ca(K)^n.
		\end{align}
		Then by Cauchy--Schwarz inequality,
		\begin{align}\label{CS1}
			S(K,w)\ca(K)^{2n} &\le
			\int_0^{2\pi} |P_n(\x^*(e^{i\te}))|^2 w(\x^*(e^{i\te})) \frac{d\theta}{2\pi}
			\int_0^{2\pi} \big|\Re[U(e^{i\te})B^n(e^{i\te})]\big|^2 \frac{d\theta}{2\pi}.
		\end{align}
		Since $|U(e^{i\te})B^n(e^{i\te})|=1$, we have
		\begin{equation}\label{Re-Re}
		2\big|\Re[U(e^{i\te})B^n(e^{i\te})]\big|^2 = 1+\Re[U^2(e^{i\te})B^{2n}(e^{i\te})]
		\end{equation}
		and hence
		\begin{align}\label{CS2}
			2S(K,w) \le \big[W_{2,n}(K,w)\big]^2  \left(1+\Re\int_0^{2\pi}U^2(e^{i\te})B^{2n}(e^{i\te}) \frac{d\theta}{2\pi} \right) .
		\end{align}
		As noted in \cite[Lemma~8.5]{CSZ11}, the functions $\{B^m\}_{m\in\bb Z}$ are orthonormal in $L^2(\pd\bb D,\frac{d\te}{2\pi})$ by Cauchy's theorem, hence it follows from Bessel's inequality that
		\begin{align}\label{Bes-asymp}
			\lim_{n\to\infty} \int_0^{2\pi}U^2(e^{i\te})B^{2n}(e^{i\te}) \frac{d\te}{2\pi}=0.
		\end{align}
		Thus, the asymptotic lower bound \eqref{ALB-OP} follows from \eqref{CS2} and \eqref{Bes-asymp}.
		
		\smallskip
		
		Next, we turn to the non-asymptotic lower bound \eqref{LB1-OP} for bounded rational weights $w$. Without loss of generality, we assume that the numerator and denominator of $w$ in \eqref{w-rat} have no common zeros. By reordering the zeros if necessary, we also assume that
$\{a_j\}_{j=1}^{d_0'}$, $0\leq d_0' \leq d_0$, are the zeros of $w$ (repeated according to their multiplicities) that  lie outside of $K$.

		Pick $z_j\in\bb D$ such that $\x(z_j)=a_j$ for $j=1,\dots,d_0'$ and $p_j\in\bb D$ such that $\x(p_j)=b_j$ for $j=1,\dots,d_1$. Consider the function
        \begin{align}\label{L-def}
            L(z)=w(\x(z))B^{d_0}(z)\prod_{j=1}^{d_1} B(z,p_j) \text{  on } \bb D.
        \end{align}
By construction, $L(z)$ has removable singularities at the poles of $w(\x(z))$ and we extend $L(z)$ to an analytic function on $\bb D$. Since by assumption $w$ is bounded on $K$, it is also bounded in a neighborhood of $K$ and hence $L(z)$ is bounded on $\bb D$. Then by the inner-outer factorization,
        \begin{align}\label{L-inn-out}
            L(z)=\vartheta(z)B^{d_1}(z)\prod_{j=1}^{d_0'}B(z,z_j)R^2(z),
        \end{align}
where $\vartheta(z)$ is some inner function, in particular $|\vartheta(z)|\le1$ on $\bb D$. Then, using \eqref{L-inn-out}, \eqref{L-def}, \eqref{Bat0}, \eqref{Bz0at0}, \eqref{R2at0}, and \eqref{Sw-rat} we obtain
		\begin{align}
			|\vartheta(0)| &= \lim_{z\to0} \left| \frac{w(\x(z))B^{d_0}(z)\prod_{j=1}^{d_1}B(z,p_j)} {B^{d_1}(z)\prod_{j=1}^{d_0'}B(z,z_j)R^2(z)} \right|
\no \\
&= \frac{|c|\ca(K)^{d_0} \exp\left[-\sum_{j=1}^{d_1}g_K(b_j)\right]} {\ca(K)^{d_1}\exp\left[-\sum_{j=1}^{d_0'}g_K(a_j)\right]S(K,w)} = 1
		\end{align}
which implies, by the maximum modulus principle, that $\vartheta(z)$ is a unimodular constant on $\bb D$. Thus, for some constant $\psi\in \partial \bb D$,
		\begin{align}
			R^2(z)=\psi w(\x(z)) B^{-d}(z) \frac{\prod_{j=1}^{d_1}B(z,p_j)}{\prod_{j=1}^{d_0'}B(z,z_j)},
		\end{align}
		and hence
		\begin{align}
			U^2(z)=\psi B^{-d}(z) \frac{\prod_{j=1}^{d_1}B(z,p_j)}{\prod_{j=1}^{d_0'}B(z,z_j)}.
		\end{align}
		
		It follows from Cauchy's theorem that in $L^2(\pd\bb D,\frac{d\te}{2\pi})$ the constant function $\cf$ is orthogonal to $\prod_{j=1}^{d_1}B(\cdot,p_j)B^m(\cdot)$ for any $m\in\bb N$ and both have unit norm. Then by Bessel's inequality, we have for $m\in\bb N$,
		\begin{align}
			&\bigg|\bigg< \prod_{j=1}^{d_1}B(\cdot,p_j)B^m(\cdot), \prod_{j=1}^{d_0'}B(\cdot,z_j) \bigg>\bigg|^2
			\leq
			\bigg\| \prod_{j=1}^{d_0'}B(\cdot,z_j) \bigg\|^2 -
			\bigg|\bigg< \cf, \prod_{j=1}^{d_0'}B(\cdot,z_j) \bigg>\bigg|^2
			\\ \no
			&\qquad = 1 - \bigg|\prod_{j=1}^{d_0'}B(0,z_j)\bigg|^2 = 1-\exp\biggl[-2\sum_{j=1}^{d_0'}g_K(\x(z_j))\bigg] =
			1-\exp\biggl[-2\sum_{j=1}^{d_0}g_K(a_j)\bigg].
		\end{align}
		Hence for each $n\in\bb N$ with $n>d/2$ we have
		\begin{align}\label{Bes-ineq}
			&\Re\int_0^{2\pi}U^2(e^{i\te})B^{2n}(e^{i\te}) \frac{d\te}{2\pi}
			\leq \bigg|\int_0^{2\pi}U^2(e^{i\te})B^{2n}(e^{i\te}) \frac{d\te}{2\pi}\bigg|
			\\ \no
			&\qquad = \bigg|\bigg< \prod_{j=1}^{d_0'}B(\cdot,z_j), \prod_{j=1}^{d_1}B(\cdot,p_j)B^{2n-d}(\cdot) \bigg>\bigg|
			\le \bigg(1-\exp\biggl[-2\sum_{j=1}^{d_0}g_K(a_j)\bigg]\bigg)^{1/2}.
		\end{align}
		Combining \eqref{Bes-ineq} with \eqref{CS2} we obtain
		\begin{align}
			2S(K,w) \leq \big[W_{2,n}(K,w)\big]^2
			\bigg(1+\bigg(1-\exp\biggl[-2\sum_{j=1}^{d_0}g_K(a_j)\bigg]\bigg)^{1/2}\bigg)
		\end{align}
		which yields the desired lower bound \eqref{LB1-OP}.

        When $a_j\in K$ for all $j$, we have $g_K(a_j)=0$ for all $j$ and hence \eqref{LB2-OP} follows from \eqref{LB1-OP}.
		
		The only remaining part is the equivalence of \eqref{ort rat inv1} with the equality in \eqref{LB2-OP}.
		First, let us assume that the equality in \eqref{LB2-OP} holds. Since by assumption $K$ is regular and $a_j\in K$ for all $j$, it follows from \eqref{Bes-ineq} that
		\begin{equation}\label{Re bound 0}
		\Re\int_0^{2\pi}U^2(e^{i\te})B^{2n}(e^{i\te}) \frac{d\te}{2\pi}\leq 0.
		\end{equation}
		Hence, we have equality in both \eqref{CS2} and \eqref{CS1} and also
		\begin{equation}\label{Re eq 0}
			\Re\int_0^{2\pi}U^2(e^{i\te})B^{2n}(e^{i\te}) \frac{d\te}{2\pi}= 0.
		\end{equation}
		Equality in the Cauchy--Schwarz inequality \eqref{CS1} and the identity \eqref{Re-Re} yield
		\begin{equation}\label{CS3}
\frac{|P_n(\x^*(e^{i\te}))|^2 w(\x^*(e^{i\te}))}{\|P_n\sqrt{w}\|^2_{L^2(d\mu_K)}}=1+\Re (U^2(e^{i\te})B^{2n}(e^{i\te})) \mbox{ for a.e. } \theta.
		\end{equation}
Since $1+\Re (U^2(e^{i\te})B^{2n}(e^{i\te}))\leq 2$ for a.e.\ $\theta$, combining \eqref{CS3} with the equality in \eqref{LB2-OP}, we see that
\begin{equation}\label{L-infty ineq1}
    \|P_n(\x^*(\cdot))\sqrt{w(\x^*(\cdot))}\|^2_{L^\infty(d\theta)}\leq 4 S(K,w) \ca(K)^{2n}.
\end{equation}	
The inequality \eqref{L-infty ineq1} implies
\begin{equation}\label{L-infty-ineq2}
    \|P_n\sqrt{w}\|_{L^\infty(d\mu_K)}\leq 2 S(K,\sqrt{w}) \ca(K)^{n}.
\end{equation}
In view of the regularity assumption on $K$, we have  $\mathrm{supp}(\mu_K)=K$  (see \cite[Theorem~4.2.3]{Ran95} and \cite[Corollary~5.5.12]{Sim11}). Since $P_n\sqrt{w}$ is continuous on $K$, it follows from \eqref{L-infty-ineq2} that
\begin{equation}
    \|P_n\sqrt{w}\|_{K}\leq 2 S(K,\sqrt{w}) \ca(K)^{n}.
\end{equation}
Thus, $W_{\infty,n}(K,\sqrt{w})\leq 2 S(K,\sqrt{w})$. It was shown in \eqref{LB2-Cheb} that  $W_{\infty,n}(K,\sqrt{w})\geq 2 S(K,\sqrt{w})$. Thus, it follows from Theorem~\ref{LB-Thm} that $K=(wQ_n^2)^{-1}([0,1])$ where $Q_n$ is a polynomial of degree $n$ whose zeros are disjoint from $\{b_j\}_{j=1}^{d_1}$. So \eqref{ort rat inv1} holds.

Conversely, let us assume that \eqref{ort rat inv1} holds.
Let $F_n:=2wQ_n^2-1$. Then
\begin{equation}\label{ort rat inv2}
	K= (w Q_n^2)^{-1}([0,1])= F_n^{-1}([-1,1]).
\end{equation}

In view of Theorem~\ref{LB-Thm},
\begin{equation}\label{Qn}
    Q_n=r T_{n,\sqrt{w}}
\end{equation}
where $|r|=\frac{1}{t_n(K,\sqrt{w})}$. Besides, we have equality in \eqref{cap-g-ineq} and \eqref{tn2capL} which implies that
\begin{equation}\label{great eq}
			\frac{1}{|r|}=t_n(K,\sqrt{w}) = 2 |c|^{1/2} \ca(K)^{n-d/2} \exp\biggl[-\frac12\sum_{j=1}^{d_1}g_{L_n}(b_j)\bigg],
\end{equation}
We argue as in \cite[Theorem~2.10]{ELY24}. Let $J(u)=\frac{1}{2}(u+\frac{1}{u})$ be the Joukowsky map which maps $\bb D$ onto $\overline{\bb{C}}\setminus [-1,1]$ conformally. Then $J^{-1}(F_n(\x(\cdot)))$ is a bounded analytic function on $\bb D$ with the boundary values $|J^{-1}(F_n(\x(e^{i\te})))|=1$ a.e.\ $d\te$, hence by the inner-outer factorization $J^{-1}(F_n(\x(z)))=\vartheta(z) C(z)$, where $\vartheta(z)$ is a singular inner function and
\begin{equation}\label{super great}
	C(z)=B^{2n-d}(z) \prod_{j=1}^{d_1}B(z,p_j) \mbox{ on } \bb{D}.
\end{equation}
In view of \eqref{great eq}, \eqref{super great}, \eqref{Bat0}, \eqref{Bz0at0}, we have
\begin{equation}
\lim_{z\rightarrow 0}	\left|\frac{J^{-1}(F_n(\x(z)))}{C(z)}\right|=1,
\end{equation}
hence $\vartheta$ is constant. Then $J^{-1}(F_n(\x(\cdot)))=\phi C$ for some constant $\phi\in \bb \partial \bb D$ and hence
$F_n(\x(\cdot))= \frac{1}{2}(\phi C+\frac{1}{\phi C})$. Since $\int_{0}^{2\pi} C(e^{i\theta}) \frac{d\theta}{2\pi}=0$ and $|C(e^{i\theta})|=1$, a.e. $\theta$, for any $\phi$, this implies that
\begin{equation}
	\int_{0}^{2\pi} [2w(\x^*(e^{i\te})) Q_n^2(\x^*(e^{i\te})) - 1] \frac{d\theta}{2\pi}= 	\int_{0}^{2\pi} F_n(\x^*(e^{i\te})) \frac{d\theta}{2\pi}= 0.
\end{equation}
Hence
\begin{equation}\label{int eq}
\int w Q_n^2 d\mu_K = \int_{0}^{2\pi} w(\x^*(e^{i\te})) Q_n^2(\x^*(e^{i\te}))\frac{d\theta}{2\pi}=\frac{1}{2}.
\end{equation}
Combining \eqref{Qn}, \eqref{int eq} and \eqref{ort rat inv2}, we get
\begin{equation}\label{L2-Linf-equal}
	\int w T_{n,\sqrt{w}}^2 d\mu_K= \frac{1}{2r^2}= \frac{\|\sqrt{w}T_{n,\sqrt{w}}\|^2_K}{2}={2}S(K,w)\ca(K)^{2n}.
\end{equation}
It follows from \eqref{L2-Linf-equal} that
\begin{equation}
[W_{2,n}(K,w)]^2\leq {2}S(K,w).
\end{equation}
Combined with \eqref{LB2-OP} this yields $[W_{2,n}(K,w)]^2= {2}S(K,w)$. Moreover, since the $L^2(d\mu)$ extremal polynomial $P_n$ is unique, it follows from \eqref{L2-Linf-equal} that $P_n=T_{n,\sqrt{w}}$ and hence $Q_n=rP_n$ by \eqref{Qn}.
	\end{proof}
	
As in the case of Chebyshev polynomials, we can extend the non-asymptotic lower bound \eqref{LB2-OP} to rational product weights.

    \begin{theorem}\label{LB-OP-Thm}
		Let $K\subset\bb R$ be a regular compact set and suppose $w$ is a rational product weight on $K$ of the from \eqref{rpw}. Then \eqref{LB2-OP} holds. In addition, if the assumption that $P_{d_0}$ has zeros on $K$ is dropped in Definition~\ref{rpw-def}, then \eqref{LB1-OP} holds, where $\{a_j\}_{j=1}^{d_0}$ are the zeros of $P_{d_0}$ repeated according to their multiplicities.
    \end{theorem}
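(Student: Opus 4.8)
The plan is to follow the approximation scheme of Theorem~\ref{prod-w-thm2}, with the $L^\infty$ extremal property replaced by the $L^2$-minimality of monic orthogonal polynomials and the non-asymptotic bounds of Theorem~\ref{ALB-OP-Thm} applied to the finite truncations. When $w=|P_{d_0}/Q_{d_1}|$ has no infinite factor, \eqref{LB2-OP} is exactly Theorem~\ref{ALB-OP-Thm}, so the content lies in the second form of \eqref{rpw}. Writing $w_0=|P_{d_0}/Q_{d_1}|$ and $w_k(x)=\prod_{j=1}^k\big|\frac{x-a_j}{x-b_j}\big|$, Theorem~\ref{prod-w-thm} gives that $w_0w_k\searrow w$ monotonically and, by Dini's theorem, uniformly on $K$, while the monotone-convergence argument used there yields $S(K,w_0w_k)\to S(K,w)>0$.

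The key observation is that each truncation $w_0w_k=|g_k|$, with $g_k=\frac{P_{d_0}}{Q_{d_1}}\prod_{j=1}^k\frac{x-a_j}{x-b_j}$, is again of the type covered by Theorem~\ref{ALB-OP-Thm}: it is bounded on $K$, its poles lie in $\bb C\bs K$, the degree difference $d=d_1-d_0$ is the same for every $k$ (the truncated product being balanced), and all of its zeros, namely $\{a_j\}_{j=1}^k$ together with the zeros of $P_{d_0}$, lie in $K$. Hence \eqref{LB2-OP} gives $[W_{2,n}(K,w_0w_k)]^2\ge 2S(K,w_0w_k)$ for every $n>d/2$, with the range of $n$ independent of $k$.

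Let $P_n$ denote the monic orthogonal polynomial for $d\mu=w\,d\mu_K$, which is well defined since $K$ is regular and $\log w\in L^1(\mu_K)$. Testing the $L^2(w_0w_k\,d\mu_K)$-minimality of the monic orthogonal polynomial for the weight $w_0w_k$ against $P_n$ gives
\begin{align}
2S(K,w_0w_k)\,\ca(K)^{2n}\le \min_{Q_n\text{ monic}}\int_K |Q_n|^2\, w_0w_k\,d\mu_K \le \int_K |P_n|^2\,w_0w_k\,d\mu_K .
\end{align}
Since $|P_n|^2$ is bounded on the compact set $K$ and $w_0w_k\to w$ uniformly, the right-hand side converges to $\int_K|P_n|^2 w\,d\mu_K=\|P_n\|_{L^2(\mu)}^2$, while $2S(K,w_0w_k)\to 2S(K,w)$; letting $k\to\infty$ and dividing by $\ca(K)^{2n}$ yields \eqref{LB2-OP}. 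For the final assertion one takes $P_{d_0}$ with zeros off $K$, so that $g_k$ now has $d_0'=\deg P_{d_0}$ zeros outside $K$; applying \eqref{LB1-OP} in place of \eqref{LB2-OP} produces the same chain with the $k$-independent prefactor $\big(1+\sqrt{1-\exp[-2\sum_j g_K(a_j)]}\big)^{-1}$ and gives \eqref{LB1-OP} in the limit.

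The main obstacle is the base case rather than the limiting procedure: one must confirm that Theorem~\ref{ALB-OP-Thm}, whose statement is phrased for rational weights, genuinely applies to the truncations $w_0w_k=|g_k|$, which are moduli of (possibly complex) rational functions with simple zeros inside $K$. The decisive point is that all zeros $a_j$ lie in $K$, so in the uniformization argument there are no interior Blaschke factors $B(\cdot,z_j)$ and the relevant quantity $\Re\int_0^{2\pi}U^2B^{2n}\,\frac{d\te}{2\pi}$ is nonpositive, exactly as in the extension of the Chebyshev bound \eqref{LB3-Cheb} to $w=|R|$. Once this is secured, the uniform convergence of $w_0w_k$ and the monotone convergence of the Szeg\H{o} factors, both inherited from Theorems~\ref{prod-w-thm} and~\ref{prod-w-thm2}, make the passage to the limit routine.
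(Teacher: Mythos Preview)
Your proposal is correct and follows essentially the same approach as the paper's proof: approximate $w$ by the truncated products $w_0w_k$, apply the non-asymptotic bound \eqref{LB1-OP} (or \eqref{LB2-OP}) from Theorem~\ref{ALB-OP-Thm} to each truncation, compare via the $L^2$-minimality of the monic orthogonal polynomial $P_{n,w}$, and pass to the limit using uniform convergence of $w_0w_k\to w$ and monotone convergence of the Szeg\H{o} factors. You are in fact slightly more explicit than the paper in flagging the one subtle point---that Theorem~\ref{ALB-OP-Thm} is literally stated for rational weights while $w_0w_k=|g_k|$ is in general only the modulus of a complex rational function---though, like the paper, you ultimately take the applicability of \eqref{LB1-OP} in this mildly extended setting as given.
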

    \begin{proof}
		The proof is essentially the same as in Theorem~\ref{prod-w-thm2}. It suffices to consider only the second part where the polynomial $P_{d_0}$ from Definition~\ref{rpw-def} is not assumed to have zeros on $K$.

		Let $w_0=|P_{d_0}/Q_{d_1}|$. If $w=w_0$, the result is contained in Theorem~\ref{ALB-OP-Thm}. So assume $w(x)=w_0(x)\prod_{j=1}^\infty\big|\frac{x-a_j}{x-b_j}\big|$. Let $w_k(x) = \prod_{j=1}^k\big|\frac{x-a_j}{x-b_j}\big|$ and recall that $w_0w_k\to w$ monotonically and uniformly on $K$. Then, as in \eqref{wk-conv1} and \eqref{wk-conv2}, we get
    \begin{align}
    \begin{split}\label{wk-conv-OP}
        \|\sqrt{w_0w_k}P_{n,w}\|_{L^2(d\mu_K)} &\to \|\sqrt{w}P_{n,w}\|_{L^2(d\mu_K)} = W_{2,n}(K,w)\ca(K)^n, \\
        S(K,w_0w_k) &\to S(K,w),
    \end{split}
    \end{align}
    where $P_{n,w}$ is the $n$-th monic orthogonal polynomial in $L^2(wd\mu_K)$. By \eqref{LB1-OP}, we have for all $n>d_1-d_0$,
		\begin{align}
			&\frac{2S(K,w_0w_k)} {1+\sqrt{1-\exp\bigl[-2\sum_{j=1}^{d_0}g_K(a_j)\big]}} \le [W_{2,n}(K,w_0w_k)]^2 \no\\
            &\qquad = \frac{\|\sqrt{w_0w_k}P_{n,w_0w_k}\|_{L^2(d\mu_K)}^2}{\ca(K)^{2n}}
            \le \frac{\|\sqrt{w_0w_k}P_{n,w}\|_{L^2(d\mu_K)}^2}{\ca(K)^{2n}}.
		\end{align}
		Then taking the limit as $k\to\infty$ and using \eqref{wk-conv-OP} finishes the proof.
    \end{proof}

	\begin{remark}\label{kgamma}
		The following construction  can be found in \cite{Gonc14}. Let $\gamma_0 = 1$ and $\gamma = (\gamma_n)_{n=1}^{\infty}$ be a sequence satisfying $0 < \gamma_n < 1/4$ for all $n \in \mathbb{N}$ and \[ \sum_{n=1}^{\infty} 2^{-n} \log (1/\gamma_n) < \infty. \]
		Let $(f_n)_{n=1}^{\infty}$ by \[ f_1(z) := \frac{2z(z - 1)}{\gamma_1} + 1 \quad \text{and} \quad f_n(z) := \frac{z^2}{2\gamma_n} + 1 - \frac{1}{2\gamma_n} \] for $n > 1$.
		
		Let $E_0=[0,1]$ and $E_n=(f_n \circ \ldots \circ f_1)^{-1} ([-1,1])$. Then $K(\gamma) := \cap_{s=0}^{\infty} E_s$ is a non-polar Cantor set in $[0, 1]$ where $\{0, 1\} \subset K(\gamma)$.
		It was shown in \cite[Proposition~3.1]{GonHat15} that
		\begin{equation}\label{Can1}
			W_{\infty,2^n}(K(\gamma), 1)= \frac{1}{2} \exp\left[ 2^n \sum_{s=n+1}^\infty 2^{-s}\log{(1/\gamma_s)} \right]
		\end{equation}	
		and it was proved in \cite[Eq.~(5.3)]{AlpGon16} that
		\begin{equation}\label{Can2}
			W_{2,2^n}(K(\gamma),1)= \frac{\sqrt{1-2\gamma_{n+1}}}{2\exp(\sum_{k=n+1}^\infty 2^{n-k} \log{\gamma_k})}
		\end{equation}
		
		Assume that $\gamma_n\rightarrow 1/4$. Then it follows from \eqref{Can1} and \eqref{Can2} that
\begin{align}
[W_{2,2^n}(K(\gamma),1)]^2 \rightarrow 2
\;\text{ and }\;
W_{\infty,2^n}(K(\gamma), 1)\rightarrow 2.
\end{align}
Since $[W_{2,s}(K(\gamma),1)]^2\geq 2$ (\cite[Theorem~3.1]{AZ20a}) and $W_{\infty,s}(K(\gamma), 1)\geq 2$ (\cite[Theorem~2]{Sch08}),
we have
\begin{align}\label{ALB-satur}
\liminf_{s\rightarrow\infty} [W_{2,s}(K(\gamma),1)]^2=2
\;\text{ and }\;
\liminf_{s\rightarrow\infty} W_{\infty,s}(K(\gamma), 1)= 2.
\end{align}
Hence, such a set $K(\gamma)$ is an example of a Cantor set which realizes the equality in both \eqref{ALB-Cheb} and \eqref{ALB-OP} for $w\equiv 1$. In addition, depending on how fast $\gamma_n$ converges to $1/4$ (see \cite[Section~7]{AlpGon17} for the precise statement), we can find a Cantor set $K(\gamma)$ which is a Parreau--Widom set or a non-Parreau--Widom set with this property.
	\end{remark}
	
%%%%%%%%%%%%%%%%%%%%%%%%%%%%%%%%%%%%%%%%%%%%%%%%%%%%%%%%%%%%%%%%%%%%%
	
	%%%%%%%%%%%%%%%%%%%%%%%%%%%%%%%%%%%%%%%%%%%%%%%%%%%%%%%%%%%%%%%%%%%%%
	
%%%%%%%%%%%%%%%%%%%%%%%%%%%%%%%%%%%%%%%%%%%%%%%%%%%%%%%%%%%%%%%%%%%%%
\end{document}